\newcommand{\T}{\textrm{T}}
\DeclareMathOperator*{\argmin}{arg\,min}
\newtheorem{theorem}{Theorem}[section]
\newtheorem{remark}{Remark}[section]
\newtheorem{lemma}{Lemma}[section]
\newtheorem{assumption}{Assumption}
\newcommand*{\rom}[1]{\expandafter\@slowromancap\romannumeral #1@}
\begin{document}

\graphicspath{{figures/}}

\begin{frontmatter}

\title{Stochastic Gradient Descent for Semilinear Elliptic Equations with Uncertainties}


%
%
%
%

\author[add1]{Ting Wang}
\author[add1]{Jaroslaw Knap}

\address[add1]{Physical Modeling \& Simulation Branch, CISD, CCDC U.S. Army Research Laboratory}

\begin{abstract}
Randomness is ubiquitous in modern engineering. The uncertainty is
often modeled as random coefficients in the differential equations
that describe the underlying physics.  In this work, we describe a
two-step framework for numerically solving semilinear elliptic partial
differential equations with random coefficients: 1) reformulate
the problem as a functional minimization problem based on the
direct method of calculus of variation; 2) solve the minimization
problem using the stochastic gradient descent method.  We
provide the convergence criterion for the resulted stochastic gradient
descent algorithm and discuss some useful technique to overcome the issues of ill-conditioning and large variance. The accuracy and efficiency of the algorithm are demonstrated by numerical experiments.
\end{abstract}

\begin{keyword}
Semilinear PDE, polynomial chaos, uncertainty quantification, stochastic gradient descent, variance reduction
\end{keyword}

\end{frontmatter}



\section{Introduction}
Many problems in science and engineering involve spatially varying
input data. Often, the input data is subject to uncertainties due to
inherent randomness. For example, the details of spatial variations of
properties and structure of engineering materials are typically
obscure, and randomness and uncertainty are fundamental features of
these complex physical systems. Under these circumstances, traditional
deterministic models are rarely capable of properly handling this
randomness and yielding accurate predictions. Therefore, in order to
furnish accurate predictions, randomness must be incorporated directly
into the model and the propagation of the resulting uncertainty
between its input and output must be quantified accordingly.
A model, particularly important in many applications, consists of the
random input data in the form of a random field and a partial
differential equation (PDE). The specific model problem considered
here is
\begin{equation}\label{eqn:abstract-PDE}
\mathcal{L}(\kappa) (u) = 0 \quad\textrm{in}~ D 
\end{equation}
where $\mathcal{L}$ is a nonlinear elliptic operator dependent on a
random field $\kappa$ over a probability space $(\Omega, \mathcal{F}, \mathbb{P})$, and $u$ is a
solution. Examples of~\eqref{eqn:abstract-PDE} include, but are not
limited to, flow of water through random porous medium and modeling of
the mechanical response of materials with random microstructure.

Over the past few decades, the critical need to seek solutions
of~\eqref{eqn:abstract-PDE} has yielded a wealth of numerical
approaches. Numerical methods for solving PDEs with random
coefficients have been traditionally classified into three major
categories: stochastic collocation (SC)~\cite{babuvska2007stochastic,
  nobile2008sparse, nobile2008anisotropic}, stochastic Galerkin (SG)
\cite{ghanem2003stochastic, babuska2004galerkin,
  gunzburger2014stochastic, xiu2002wiener, xiu2003modeling} and Monte
Carlo (MC) \cite{babuska2004galerkin, matthies2005galerkin,
  kuo2016application}.  SC aims to first solve the deterministic
counterpart of~\eqref{eqn:abstract-PDE} on a set of collocation points
and then interpolate over the entire image space of the random
element. Hence, the method is non-intrusive meaning that it can take
advantage of existing legacy solvers developed for deterministic
problems.  Similarly, MC is non-intrusive as well since it relies on
taking sample average over a set of deterministic solutions computed
from a set of realizations of the random field.  In contrast, SG is considered as
intrusive since it requires construction of discretizations of both
the stochastic space and physical space simultaneously and, as a
result, it commonly tends to produce large systems of algebraic
equations whose solutions are needed. However, these algebraic systems
are considerably different from their deterministic counterparts and
thus deterministic legacy solvers cannot be easily utilized.

We emphasize that all of the three categories discussed so far depend
on the stochastic weak formulation of~\eqref{eqn:abstract-PDE}.  In
this work, alternatively, we take the variational viewpoint and
reformulate problem~\eqref{eqn:abstract-PDE} as a problem of seeking
minimizers of the following functional
\begin{equation}\label{eqn:abstract-energy}
E(u) =  \mathbb{E}\left\{\int_D I(x, u, \nabla u, \omega) \, dx\right\},
\end{equation}
whose Euler-Lagrange equations coincide with~\eqref{eqn:abstract-PDE}
under suitable assumptions.  Therefore, over an appropriate space, 
solving~\eqref{eqn:abstract-PDE} is equivalent to
minimizing~\eqref{eqn:abstract-energy}. The use of variational
formulations has become widespread in many areas of science and
engineering due to their many
advantages~\cite{struwe1990variational,reddy2017energy}. First and
foremost, the equations in the weak form are often applicable in
situations when the strong form may no longer be valid. A case in
point is modeling of microstructure evolution in materials, where fine
scale oscillations may emerge, leading to highly irregular
solutions~\cite{ball1989fine,muller1999variational}. Second,
variational formulations are known to be remarkably convenient for
numerical computation as they often produce numerical methods capable
of preserving, at least to some extent, the structure of the original
problem~\cite{simo2006computational,marsden2001discrete}.

In order to establish the existence of minimizers of $E(u)$ in an
appropriate space $W$, by the direct method of calculus of variations
\cite{dacorogna2007direct}, it is sufficient to identify a minimizing
sequence $\{u_{\nu}\} \subset W$ which satisfies two properties:
\begin{enumerate}
\item[(\rom{1})] the sequence $\{u_{\nu}\}$ is compact under the weak topology on $W$, i.e., \[ u_{\nu} \rightharpoonup u^* ~\textrm{in}~W.\]
This is often implied by the boundedness of the sequence (up to the extraction of a subsequence), i.e., $\|u_{\nu}\|_W \leq \gamma$ for some constant $\gamma$ independent of $\nu$.
\item[(\rom{2})] the functional $E$ is lower semicontinuous with respect to weak convergence, i.e., 
\[u_{\nu} \rightharpoonup u^* ~\textrm{in}~W\quad \textrm{implies}\quad \liminf_{\nu\to \infty}E(u_{\nu}) \geq E(u^*).\]
\end{enumerate}
Given the above two properties, it is straightforward to verify that the function $u^*$ is indeed a minimizer of $E(u)$.
The direct method is not only of theoretical importance.  
From the numerical point of view, it suggests that
if we can identify a minimizing sequence $\{u_{\nu}\}$
each of which solves~\eqref{eqn:abstract-energy} over 
a finite dimensional subspace $W_{\nu} \subset W$, i.e., 
\[u_{\nu} = \argmin_{u \in W_{\nu}} E(u),\]
then the above two properties ensure that $u_{\nu}$ converges weakly to the minimizer $u^*$.
That is, when $\{u_{\nu}\}$ is interpreted as a sequence of solutions to~\eqref{eqn:abstract-energy} over a sequence of finite dimensional spaces $\{W_{\nu}\}$ that approximates $W$, the direct method of variational calculus automatically guarantees the numerical consistency.
Bearing this in mind, numerical approximation to~\eqref{eqn:abstract-energy} boils down to 
minimizing $E(u)$ over finite dimensional spaces $W_{\nu}$ with suitable optimization methods.

The fundamental difficulty in solving the stochastic optimization
problem~\eqref{eqn:abstract-energy} is that the expectation often
involves high dimensional integral which generally cannot be computed
with high accuracy~\cite{nemirovski2009robust}.  Thus, conventional
nonlinear optimization techniques are seldom suitable for problems
like~\eqref{eqn:abstract-energy} since an inaccurate gradient
estimation is usually detrimental to the convergence of the
algorithms.  In contrast, stochastic gradient descent (SGD) replaces
the actual gradient by its noisy estimate, but is guaranteed to
converge under mild
conditions~\cite{bottou2010large,bottou2018optimization,kingma2014adam}.
The method can be traced back to the Robbins--Monro
algorithm~\cite{robbins1951stochastic} and has nowadays become one of
the cornerstone for large-scale machine
learning~\cite{bottou2018optimization}.  However, due to the noisy
nature of SGD iteration, a naive use of the algorithm in many
instances suffers difficult tuning of parameters and extremely slow
convergence rate~\cite{nemirovski2009robust}.
In this article, we describe an application of SGD to construct
numerical schemes for the solution of the variational stochastic
problem~\eqref{eqn:abstract-energy}. We also provide simple, yet
powerful, strategies for efficient and robust SGD algorithms in the
above context.

The reminder of the article is organized as follows. In
Section~\ref{sec:model}, we setup the semilinear model problem and
impose several running assumptions on the model.  The variational
reformulation of the model problem as a stochastic minimization
problem is described in
Section~\ref{sec:direct-method-and-PCE}. Afterward, in
Section~\ref{sec:SGD}, we propose to utilize the SGD to solve the
minimization problem and discuss some useful technique for noise
reduction and convergence acceleration for SGD. Finally, numerical
benchmarks are presented in Section~\ref{sec:example}.
 
\section{Model problem}\label{sec:model}
We introduce a probability space $(\Omega, \mathcal{F}, \mathbb{P})$
where $\Omega$ is the set of all events, $\mathcal{F}$ is the
$\sigma$-algebra consisting of all measurable events and
$\mathbb{P}$ a probability measure.  We consider the following
semilinear elliptic PDE with random coefficient $\kappa$ defined in $(\Omega,
\mathcal{F}, \mathbb{P})$,
\begin{equation}\label{eqn:PDE}
\begin{split}
-\nabla \cdot (\kappa(x, \omega) \nabla u(x, \omega)) + f(x, u(x, \omega), \omega) & = 0 \qquad x \in D\\
\qquad u(x, \omega) &= 0 \qquad x \in \partial D , 
\end{split}
\end{equation}
where the domain $D$ is a bounded subset of $\mathbb{R}^d$, the boundary $\partial D$ is either smooth or convex and piecewise smooth, the diffusion coefficient $\kappa: D \times  \Omega \to \mathbb{R}$ is a random field with continuous and bounded covariance functions and the nonlinear term $f: D \times \mathbb{R} \times \Omega$ is sufficiently smooth for almost surely all $\omega \in \Omega$.
We assume the solution to~\eqref{eqn:PDE} exists and is unique.

We deal with the case of finite dimensional noise, i.e., 
there exists finitely many independent random variables $Y_1, \ldots, Y_K$ such that
\[
\kappa(x, \omega) = \kappa(x, Y_1(\omega), \ldots, Y_K(\omega)).
\]
These random variables are often referred as the stochastic germ that bring randomness into the system.
From the practical perspective, the finite dimensional noise assumption is reasonable since
the random input often
admits a parametrization in terms of finitely many random variables.
From the theoretical perspective, by the Karhunen-Lo\`eve (KL) expansion~\cite{mercer1909xvi}:
when the random field $\kappa(x, \omega)$ is square integrable with continuous covariance function, i.e., \[
\kappa(x, \cdot) \in L_{\mathbb{P}}^2(\Omega), \qquad \forall x \in D 
\]
and the covariance function
\[
\textrm{Cov}_{\kappa}(x, y) = \mathbb{E}[(\kappa(x) - \mathbb{E}[\kappa(x)])(\kappa(y) - \mathbb{E}[\kappa(y)])]
\]
is a well-defined continuous function of $x, y \in D$, then the random field $\kappa$ can be approximated by the truncated KL expansion
\[
\kappa(x, \omega) \approx \bar{\kappa}(x) + \sum_{n=1}^N  \sqrt{\lambda_n} \psi_n(x) Y_n(\omega),
\]
where $\bar{\kappa}$ is the mean of the random field, 
$(\lambda_n, \psi_n)$ are eigen-pairs of the covariance kernel 
\[(C_{\kappa} g)(x) \triangleq \int_D \textrm{Cov}_{\kappa}(x, y) g(y)\, dy\]
and $Y_n(\omega)$ are uncorrelated and identically distributed random variables 
with mean zero and unit variance.

Consequently, when the randomness of~\eqref{eqn:PDE} is completely characterized by finitely many independent
random variables ${\bf Y} = (Y_1, \ldots, Y_K)$,
problem~\eqref{eqn:PDE} is equivalent to 
\begin{equation}\label{eqn:PDE-finite-noise}
\begin{split}
-\nabla \cdot \left(\kappa(x, {\bf Y}) \nabla u(x, {\bf Y})\right) + f(x, u(x, {\bf Y}), {\bf Y}) &= 0, \qquad x \in D,\\
u(x, {\bf Y}) &= 0, \qquad x \in \partial D
\end{split}
\end{equation}
by the Doob-Dynkin’s lemma~\cite{oksendal2013stochastic}.  To define a
suitable space of solution of the above problem, we introduce the
physical space $V = \mathcal{H}_0^1(D)$, i.e., the Sobolev space of
functions with weak derivatives up to order $1$ and vanishing on the
boundary. We also define the stochastic space $S = L_{\mathbb{P}}^2(\Omega)$, i.e.,
the space of $\mathbb{R}$-valued square integrable (with respect to $\mathbb{P}$)
random variables.  
The solution $u(x, {\bf Y})$ to \eqref{eqn:PDE-finite-noise} is now thus defined in the tensor space 
\[
V \otimes S = \mathcal{H}_0^1(D) \otimes L_{\mathbb{P}}^2(\Omega)
= \left\{v: D \times \Omega \to \mathbb{R} \left|~ v \in \mathcal{B}(D)\otimes \mathcal{F}, \mathbb{E}\left\{\|v\|_{\mathcal{H}_0^1(D)}^2\right\} < \infty\right.\right\}.
\]

Now we make some technical assumptions on $\kappa$ and $f$.
We denote $\bar{D}$ the closure of $D$.
\begin{assumption}\label{assume:kappa}
$\kappa(x, {\bf Y})$ is uniformly bounded and uniformly coercive, i.e.,
there exist constants $0 < \kappa_{\min} \leq \kappa_{\max}$ such that
\[\mathbb{P}\left(\omega \in \Omega : \kappa_{\textrm{min}} \leq \kappa(x, {\bf Y}(\omega)) \leq \kappa_{\textrm{max}}, ~\forall x \in \bar{D} \right) = 1.\]
\end{assumption}
\begin{assumption}\label{assume:f}
$f(x, u, {\bf Y})$ is uniformly bounded, i.e., there exists a constant $f_{\textrm{max}} > 0$
such that
\[
\mathbb{P}\left(\omega \in \Omega : |f(x, u, {\bf Y}(\omega))| \leq  f_{\textrm{max}}, ~\forall x \in \bar{D}, \forall  u \in \mathbb{R}  \right) = 1.
\]
Furthermore, $f(x, u, {\bf Y}(\omega))$ is uniformly Lipschitz continuous in $u$, i.e., there exists a Lipschitz constant $L_f > 0$ such that
\[\mathbb{P}\left(\omega \in \Omega : |f(x, u_1, , {\bf Y}(\omega)) - f(x, u_2, , {\bf Y}(\omega))| \leq L_f | u_ 1 - u_2 |,~ \forall u_1, u_2 \in \mathbb{R},  \forall x \in \bar{D} \right) = 1.\]
Finally,  $\partial_u f(x, u, {\bf Y})$ is uniformly bounded from below, i.e., there exists a constant $\delta > 0$ such that 
\[
\mathbb{P}(\omega \in \Omega :  \partial_u f(x, u, {\bf Y}(\omega)) \geq \delta,
~ \forall x \in \bar{D}, \forall u \in \mathbb{R}) = 1.
\]
\end{assumption}
As we shall see hereafter, these
assumptions are crucial to guarantee the convergence of the SGD
algorithm.


\section{Direct method and polynomial chaos expansion}\label{sec:direct-method-and-PCE}
\subsection{Direct method in calculus of variations}\label{subsec:direct-method}
Following basic ideas of the calculus of variation, 
our starting point is to reformulate the stochastic PDE problem~\eqref{eqn:PDE-finite-noise}
as the minimization problem
\begin{equation}\label{eqn:exact-energy}
\min_{u \in V \otimes S} E(u) = \mathbb{E}\left\{\int_D \frac{1}{2} \kappa(x, {\bf Y}) |\nabla_x u(x, {\bf Y})|^2 + F(x, u(x, {\bf Y}), {\bf Y}) \, dx\right\}
\end{equation}
with $\partial_u F(x, u, y) = f(x, u, y)$, and the expectation $\mathbb{E}$ taken with respect to the random vector ${\bf Y}$.
We first show that~\eqref{eqn:exact-energy} has a minimizer and this minimizer satisfies the weak form of~\eqref{eqn:PDE-finite-noise}. The result is a simple application of the direct method in variational calculus \cite{dacorogna2007direct}.
\begin{theorem}\label{thm:existence-and-uniqueness}
Under Assumptions~\ref{assume:kappa} and~\ref{assume:f} and additionally that
$F$ is uniformly bounded from below, then the problem~\eqref{eqn:exact-energy} has a minimizer $u^* \in V\otimes S$.
Furthermore, $u^*$ satisfies the following weak form of~\eqref{eqn:PDE-finite-noise}:
\begin{equation}\label{eqn:weak-form}
    \mathbb{E}\left\{\int_D \kappa(x,{\bf Y}) \nabla_x u(x, {\bf Y}) \cdot \nabla_x v(x, {\bf Y}) + f(x, u, {\bf Y}) v(x, {\bf Y}) \, dx\right\}=0, \quad \forall v \in V \otimes S.
\end{equation}
\end{theorem}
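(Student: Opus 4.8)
The plan is to apply the direct method of the calculus of variations, following the two-step template laid out in the introduction: first extract a weakly convergent minimizing sequence, then show weak lower semicontinuity of $E$. I would begin by checking that $E$ is bounded below on $V\otimes S$: since $\kappa \geq \kappa_{\min} > 0$ by Assumption~\ref{assume:kappa} and $F$ is bounded below by hypothesis, we get $E(u) \geq \frac{1}{2}\kappa_{\min}\,\mathbb{E}\{\|\nabla_x u\|_{L^2(D)}^2\} + C|D|$ for some constant $C$. Hence $\inf_{u\in V\otimes S} E(u)$ is finite, and we may take a minimizing sequence $\{u_\nu\}$. The lower bound also shows $\mathbb{E}\{\|\nabla_x u_\nu\|_{L^2(D)}^2\}$ is bounded; combining with the Poincar\'e inequality on $V = \mathcal{H}_0^1(D)$ (which has a constant independent of $\omega$), the sequence is bounded in $V\otimes S$. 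Since $V\otimes S$ is a Hilbert space, after passing to a subsequence we obtain $u_\nu \rightharpoonup u^*$ weakly in $V\otimes S$, which establishes property~(\rom{1}).

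Next I would establish weak lower semicontinuity, property~(\rom{2}). The quadratic term $u \mapsto \frac12\mathbb{E}\{\int_D \kappa |\nabla_x u|^2 dx\}$ is a continuous, convex, nonnegative quadratic form on $V\otimes S$ (convexity uses $\kappa \geq 0$ a.s.), hence weakly lower semicontinuous. For the lower-order term $\mathbb{E}\{\int_D F(x,u,{\bf Y})\,dx\}$, the key point is that the embedding $V\otimes S \hookrightarrow L^2(D)\otimes S$ is compact in the physical variable (Rellich--Kondrachov), so along a further subsequence $u_\nu \to u^*$ strongly in $L^2(D\times\Omega)$ and a.e.; since $F$ is continuous in $u$, bounded below, and its derivative $f$ is bounded by $f_{\max}$ so that $F$ grows at most linearly in $u$, dominated/Fatou-type arguments give $\mathbb{E}\{\int_D F(x,u_\nu,{\bf Y})\,dx\} \to \mathbb{E}\{\int_D F(x,u^*,{\bf Y})\,dx\}$. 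Adding the two pieces yields $\liminf_\nu E(u_\nu) \geq E(u^*)$, so $u^*$ is a minimizer.

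For the second assertion, I would derive the Euler--Lagrange equation by the standard first-variation argument: for $v \in V\otimes S$ and $t\in\mathbb{R}$, the function $t \mapsto E(u^* + tv)$ is differentiable at $t=0$ (differentiation under the expectation and integral is justified by the boundedness of $\kappa$ and $f$ together with Cauchy--Schwarz), and its derivative must vanish at the minimizer. Computing the derivative gives exactly~\eqref{eqn:weak-form}. The uniform lower bound $\partial_u f \geq \delta > 0$ from Assumption~\ref{assume:f} can additionally be invoked if one wants strict convexity of $E$ and hence uniqueness, though uniqueness of the PDE solution was already assumed.

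The main obstacle is the lower-order term in the semicontinuity step: one must carefully handle the passage to the limit in $\mathbb{E}\{\int_D F(x,u_\nu,{\bf Y})\,dx\}$, making sure the compact embedding is applied only in the physical variable while the stochastic variable is handled by the at-most-linear growth of $F$ (a consequence of $|f|\leq f_{\max}$) together with the uniform $V\otimes S$-bound on $u_\nu$, so that a uniform integrability / generalized dominated convergence argument applies across $D\times\Omega$. Everything else is routine.
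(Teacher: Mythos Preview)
Your overall strategy coincides with the paper's: apply the direct method, bound the minimizing sequence via coercivity of $\kappa$ and Poincar\'e, extract a weak limit in the Hilbert space $V\otimes S$, and establish weak lower semicontinuity by splitting $E=E_1+E_2$. The Euler--Lagrange computation is likewise the same. For the quadratic piece $E_1$ you invoke the general principle ``convex plus strongly continuous implies weakly l.s.c.,'' while the paper argues by hand, pairing $\kappa\nabla_x u^*$ against $\nabla_x u_\nu$ and applying Cauchy--Schwarz; both routes are valid.

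The one genuine gap in your proposal is the treatment of $E_2$. The embedding $V\otimes S\hookrightarrow L^2(D)\otimes S$ is \emph{not} compact: Rellich--Kondrachov makes $\mathcal H^1_0(D)\hookrightarrow L^2(D)$ compact, but tensoring a compact operator with the identity on the infinite-dimensional space $L^2_{\mathbb P}(\Omega)$ destroys compactness (consider $u_\nu(x,\omega)=\phi(x)e_\nu(\omega)$ with $\{e_\nu\}$ orthonormal in $S$). You are right to flag this as the main obstacle, but the uniform-integrability workaround you sketch does not by itself supply the missing strong convergence. The paper takes a different tack here: it uses the mean-value bound $|F(x,u_\nu,\mathbf Y)-F(x,u^*,\mathbf Y)|\le f_{\max}|u_\nu-u^*|$ and then asserts that $\mathbb E\int_D|u_\nu-u^*|\,dx\to 0$ follows ``from the weak convergence,'' which as written is also not fully justified. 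A clean way to close both arguments is to exploit $\partial_u f\ge\delta>0$ from Assumption~\ref{assume:f}, which makes $F$ convex in $u$; then
\[
E_2(u_\nu)\ \ge\ E_2(u^*)+\mathbb E\!\int_D f(x,u^*,\mathbf Y)\,(u_\nu-u^*)\,dx,
\]
and the second term tends to zero because $f(x,u^*,\mathbf Y)\in L^2(D\times\Omega)$ (boundedness of $f$) and $u_\nu\rightharpoonup u^*$ weakly there. This yields $\liminf_\nu E_2(u_\nu)\ge E_2(u^*)$ with no compactness needed.
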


\begin{proof}
As we sketched in the introduction, it is sufficient to show that 1) there exists a minimizing sequence $u_{\nu}(x, {\bf Y})$ that converges weakly to some $u^* \in V\otimes S$ and 2) the functional $E(u)$ is (weakly) lower semicontinuous~\cite{dacorogna2007direct}.

For weak convergence of the sequence $u_{\nu}(x, {\bf Y})$, it suffices to uniformly bound $u_{\nu}$ under the $V \otimes S$ norm.
To this end, note that by Assumption~\ref{assume:kappa} and the uniformly lower boundedness of $F$
\[
\frac{1}{2}\kappa(x, {\bf Y}) |\nabla_x u_{\nu}(x, {\bf Y})|^2 + F(x, u_{\nu}, {\bf Y}) \geq \frac{1}{2} \kappa_{\text{min}}|\nabla_x u_{\nu}(x, {\bf Y})|^2 + F_{\textrm{min}}.
\]
Integrating over $D$ and taking expectation of both sides lead to
\[
E(u_{\nu}) \geq \frac{1}{2} \kappa_{\text{min}}\mathbb{E}\left\{\int_D |\nabla_x u_{\nu}(x, {\bf Y})|^2 \, dx  \right\} + \int_D F_{\textrm{min}} \, dx.
\]
Invoking the Poincar\'{e}'s inequality, there exist some constants $C_1>0$ and $C_2\geq 0$ such that
\[
E(u_{\nu}) \geq C_1 \mathbb{E}\{\|u_{\nu}\|_{\mathcal{H}_0^1}^2 \} + C_2.
\]
Now note that $E(u_{\nu})$ is uniformly bounded (in $\nu$) since it is a minimizing sequence of~\eqref{eqn:exact-energy}, which implies $\mathbb{E}\{\|u_{\nu}\|_{\mathcal{H}_0^1}^2 \}$ is uniformly bounded and hence there exists $u^* \in {V \otimes S}$ such that 
\[u_{\nu} \rightharpoonup u^* \quad \textrm{in}~{V \otimes S}.\] 

Next, we justify that $E(u)$ is (weakly) lower semicontinuous. To this end,
we define
\[
E_1(u) = \mathbb{E}\left\{\int_D \frac{1}{2} \kappa(x, {\bf Y}) |\nabla_x u(x, {\bf Y})|^2 \, dx\right\}
\]
and 
\[
E_2(u) = \mathbb{E}\left\{\int_D F(x, u, {\bf Y}) \, dx\right\}
\]
and show both $E_1$ and $E_2$ are lower semicontinuous.
To see the lower semicontinuity of $E_1(u)$, 
since $u_{\nu}$ converges weakly to $u^*$ in the Hilbert space $V \otimes S$, by definition (through choosing the test function $\kappa \nabla_x u^*/2$)
\[
E_1(u^*)=
\lim_{\nu \to \infty}
\mathbb{E}\left\{\int_D \frac{1}{2}\kappa(x, {\bf Y}) \nabla_x u^*(x, {\bf Y}) \cdot  \nabla_x u_{\nu}(x, {\bf Y}) \,dx \right\}.
\]
Squaring both sides and then applying the Cauchy-Schwarz inequality lead to
\[
E_1(u^*)^2 \leq E_1(u^*)\liminf_{\nu \to \infty} E_1(u_{\nu}) .
\]
If $E_1(u^*) > 0$ (the case when $E_1(u^*) = 0$ is trivial since $E_1(u_{\nu}) \geq 0$), the above inequality implies 
\[
E_1(u^*) \leq \liminf_{\nu \to \infty} E_1(u_{\nu}),
\]
i.e., $E_1(u)$ is (weak) lower semicontinuous.
Now for $E_2(u)$, by Taylor expansion in $u$ and the uniform boundedness of $\partial_u F = f$ (Assumption~\ref{assume:f}), we have
\[
\mathbb{E}\left\{\int_D F(x, u_{\nu}, {\bf Y})\, dx\right\}
-
\mathbb{E}\left\{\int_D F(x, u^*, {\bf Y})\, dx\right\}
\leq
f_{\textrm{max}} \mathbb{E}\left\{\int_D |u_{\nu} - u^*| \, dx   \right\}.
\]
The lower semicontinuity of $E_2(u)$
follows immediately from the weak convergence of $u_{\nu}$ to $u^*$.
Therefore, $E(u)$ is lower semicontinuous and hence 
has a minimizer $u^*$. 

It remains to be shown that $u^*$ satisfies the weak form~\eqref{eqn:weak-form}.
To this end, we consider the functional $E$ evaluated at $u^* + \epsilon v$ for every $v \in V \otimes S$, i.e., $E(u^* + \epsilon v)$. 
A simple calculation shows that the Gateaux derivative satisfies
\begin{equation*}
\begin{split}
\lim_{\epsilon \to 0}
\frac{1}{\epsilon}(E(u^* + \epsilon v) - E(u^*))
=&
\mathbb{E}\left\{\int_D \kappa(x, {\bf Y}) \nabla_x u^* \cdot \nabla_x v \, dx\right\}\\
&+ \lim_{\epsilon \to 0} \mathbb{E}\left\{\int_D f(x, u^* + \epsilon(x, {\bf Y}) v, {\bf Y}) v\, dx\right\}
\end{split}
\end{equation*}
for some random variable $\epsilon(x, {\bf Y}) \in (-\epsilon,\epsilon)$.
Since $f$ is uniformly bounded by Assumption~\ref{assume:f}, by the dominated convergence theorem
\[
\lim_{\epsilon \to 0} \mathbb{E}\left\{\int_D f(x, u^* + \epsilon(x, {\bf Y}) v, {\bf Y})v\, dx\right\} = \mathbb{E}\left\{\int_D f(x, u^*, {\bf Y})v \, dx\right\}.
\] 
Owing to the fact that $u^*$ is a
minimizer, the Gateaux derivative
\[
\left.\frac{d}{d\epsilon}\right|_{\epsilon = 0}E(u^* + \epsilon v) = 0,
\]
which yields the weak form~\eqref{eqn:weak-form}.
\end{proof}


Due to the infinite dimensionality of the solution space $V \otimes S$, it is not practical to solve~\eqref{eqn:exact-energy} directly and hence we seek an approximate solution $u_{\nu}$ over a finite dimensional subspace $(V\otimes S)_{\nu} \subset V\otimes S$, where $\nu$ is a generic index parameterizing the approximation accuracy. Consequently, the approximated minimization problem over the finite dimensional subspace becomes
\begin{equation*}
\min_{u \in (V\otimes S)_{\nu}} E(u) = \mathbb{E}\left\{\int_D \frac{1}{2} \kappa(x, {\bf Y}) |\nabla_x u(x, {\bf Y})|^2 + F(x, u(x, {\bf Y}), {\bf Y}) \, dx \right\}.
\end{equation*}
Denote $u_{\nu}$ the minimizer of $E(u)$ over the finite dimensional subspace $(V\otimes S)_{\nu}$. 
Since $u_{\nu}$ is a minimization sequence of~\eqref{eqn:exact-energy}, i.e., 
\[E(u_{\nu}) \to E(u^*)\quad \textrm{as} \quad \nu\to \infty,\]
the sequence $\{u_{\nu}\}$ is
compact on $V \otimes S$ by Theorem~\ref{thm:existence-and-uniqueness}, that is, 
$u_{\nu}$ converges weakly to $u^*$, a minimizer of~\eqref{eqn:exact-energy}.
That is to say, the consistency of the numerical approximation is guaranteed automatically under the framework of direct method of variational calculus.
This serves as the theoretical foundation of the algorithm proposed in this work. Thus, the problem is reduced to finding a finite dimensional subspace minimizer $u_{\nu}$ as an approximation to the infinite dimensional space minimizer $u^*$.


\subsection{Polynomial chaos expansion}
In this section, we make the above finite dimensional subspace approximation
$(V\otimes S)_h$ explicit.
It is customary to construct approximations in the physical space $V$ by means
of polynomials, in particular those with compact support, as is the case for the finite-element method. Undeniably, there exist various finite dimensional approximations to the stochastic space $S$.
For the sake of clarity, throughout this paper we adopt the generalized polynomial chaos (PC) expansion~\cite{xiu2002wiener} as a convenient  approximation method in space $S$. 
However, we emphasize that the general framework presented in this work extends naturally to other approaches for constructing approximants, such as piecewise polynomials expansions~\cite{ghanem2003stochastic}, multiwavelet decompositions~\cite{le2004a, le2004b} to name a few.

The PC expansion is essentially a representation of second order random objects (e.g., random variables in $L_{\mathbb{P}}^2(\Omega)$ or stochastic fields in $L^2(D) \otimes L_{\mathbb{P}}^2(\Omega)$)~\cite{xiu2002wiener, xiu2003modeling, le2010spectral}.
Given a random variable $X$ in $(\Omega, \mathcal{F}, \mathbb{P})$, the PC expansion asserts that we can identify a set of $L_{\mathbb{P}}^2(\Omega)$-orthogonal univariate polynomial bases $\{\psi_j\}$ so that any function $r : \mathbb{R} \to \mathbb{R}$ satisfying  $r(X) \in L_{\mathbb{P}}^2(\Omega)$ can be expressed as
\[r(X)  =  \sum_{j =0}^{\infty} r_j \psi_j(X)\]
in the $L_{\mathbb{P}}^2(\Omega)$ sense, where the coefficients are
\[r_j = \frac{\mathbb{E}\{r(X) \psi_j(X)\}}{\mathbb{E}\{\psi_j^2(X)\}}, \qquad j = 0, 1, \ldots.\]
For instance, $\{\psi_j\}$ are Hermite polynomials when $X$ is normal and are Legendre polynomials when $X$ is uniform.
The PC expansion can be generalized to the case of $K$ dimensional random vector 
${\bf X} = (X_1, \ldots, X_K)$ with independent components.
Specifically, for any function $R : \mathbb{R}^K \to \mathbb{R}$ satisfying  $R(X) \in L_{\mathbb{P}}^2(\Omega)$ we can write
\[R({\bf X}) = \sum_{j=0}^{\infty} R_j \Psi_j(X),\]
where $\{\Psi_j\}$ are $K$-variate polynomials involving products of those univariate polynomials associated with each component.

Now, given the $K$ dimensional random vector ${\bf Y}$ in~\eqref{eqn:exact-energy}, 
we can expand the random field $u(x, {\bf Y}) \in V \otimes S$ as a generalized PC series
 \[u(x, {\bf Y}) = \sum_{j=0}^{\infty} u_j(x)\Psi_j({\bf Y}),\]
 where the coefficient function
 \[
u_j(x) = \frac{\mathbb{E}\{u(x, {\bf Y}) \Psi_j({\bf Y})\}}{\mathbb{E}\{\Psi_j^2({\bf Y})\}}, \qquad \forall j = 0, 1, \ldots. 
 \]  
 In practice, truncation of the PC series is required for numerical approximation. 
 To this end, we define $S_N$, the finite dimensional subspace spanned by the $L_{\mathbb{P}}^2(\Omega)$-orthogonal polynomials $\{\Psi_0({\bf Y}), \ldots, \Psi_N({\bf Y})\}$, i.e.,
\[
S_N = \textrm{span}\{\Psi_0({\bf Y}), \ldots, \Psi_N({\bf Y})\}.
\]
Note that $N$ is determined by both the stochastic dimensionality $K$ and the highest order of the basis polynomials $p$ through
\[N + 1 =  \frac{(p + K)!}{p! K!}.\]
Hence, the dimensionality of the stochastic subspace can be very high when $p$ and $K$
are large.
In order to further expand the coefficients $u_j(x)$, we approximate it over  
\[
V_M = \textrm{span}\{\phi_1, \ldots, \phi_M\} \subset V,
\]
the finite dimensional subspace of $V$ spanned by the bases $\{\phi_1, \ldots, \phi_M\}$.
Therefore, the finite dimensional subspace of $V \otimes S$ is $V_M \otimes S_N$ over which we have a finite dimensional approximation
\[u_c(x, {\bf Y}) \triangleq \sum_{i=1}^M \sum_{j=0}^N c_{ij} \phi_i(x) \Psi_j({\bf Y}) \approx u(x, {\bf Y}).\]
Note that in the notation of $u_c$ we omit the dependence on $M$ and $N$ in order to simplify the notation.
For convenience, we define the vector valued function $\Gamma(x, {\bf Y})$ 
consisting of all bases of $V_M \otimes S_N$ with the following numbering of index
\[
\Gamma(x, {\bf Y}) =
(\phi_1(x) \Psi_0({\bf Y}), \ldots, \phi_M(x) \Psi_0({\bf Y}), \ldots \ldots,  \phi_1(x) \Psi_N({\bf Y}), \ldots, 
\phi_M(x) \Psi_N({\bf Y}))^{\T}.
\]
Hence, the approximated solution $u_c$ can be written in the following compact form
\begin{equation}\label{eqn:intrusive-PCE}
u_c(x, {\bf Y})  =  c^{\T}\Gamma(x, {\bf Y}),
\end{equation}
where the coefficient (column) vector $c$ is
\[c = [c_{1, 0}, \ldots, c_{M, 0}, \ldots\ldots, c_{1, N}, \ldots, c_{M, N}]^{\T} \in \mathbb{R}^{M(N+1)}.\]
Over the finite dimensional subspace $V_M \otimes S_N$, the functional associated with $u_c$ becomes
\begin{equation}\label{eqn:approx-energy}
E(u_c) = \mathbb{E}\left\{\int_D \frac{1}{2} \kappa(x, {\bf Y}) |\nabla_x u_c(x, {\bf Y})|^2  + F(x, u_c(x, {\bf Y}), {\bf Y}) \, dx \right\}.
\end{equation}
Note that $E(u_c)$ is indeed a function of $c$, hence we rewrite 
$E(u_c)$ as a function of the coefficients $c$, denoted by $J(c)$. 
Therefore, minimizing $E(u_c)$ is equivalent to minimizing the following function with respect to the coefficient vector $c$,
\begin{equation}\label{eqn:min-problem}
\min_{c \in \mathbb{R}^{M(N+1)}} J(c) = \min_{c \in \mathbb{R}^{M(N+1)}} J^1(c) + J^2(c),
\end{equation}
where
\begin{equation}
\begin{split}
J^1(c) &= \mathbb{E}\left\{\int_D \frac{1}{2}\kappa(x, {\bf Y}) \left| c^{\T} \nabla_x\Gamma(x, {\bf Y})\right|^2 \, dx \right\},\\
J^2(c) &= \mathbb{E} \left\{ \int_D F(x, c^{\T} \Gamma(x, {\bf Y}), {\bf Y})\, dx \right\}
\end{split}
\end{equation}
are the values associated with the linear part
and the nonlinear part of~\eqref{eqn:PDE-finite-noise}, respectively.
Here the gradient of $\Gamma(x, {\bf Y})$ with respect to $x$ is defined as  
\[
\nabla_x \Gamma(x, {\bf Y}) = \left[\nabla_{x_1} \Gamma(x, {\bf Y}), \ldots, \nabla_{x_d} \Gamma(x, {\bf Y}) \right] \in \mathbb{R}^{M(N+1) \times d}.
\]

Finally, we make the following technical assumption regarding the basis functions. 
\begin{assumption}\label{assume:basis}
For each $i = 1, \ldots, M$, the physical-space basis satisfies
\[ \int_D |\phi_i(x)|^4 \, dx < \infty, \qquad \int_D | \nabla_x \phi_i(x) |^2 \, dx < \infty.\]
For each $j = 0, \ldots, N$, the PC basis satisfies
\[\mathbb{E} \{|\Psi_j({\bf Y})|^4\} < \infty.\]
\end{assumption} 
The integrability conditions are satisfied in most settings. For example, when the physical domain $D$ is compact and $\phi_i$ are finite element bases, the integrability is readily verified. For the stochastic space, when ${\bf Y}$ is normal or uniform, $\Psi_j({\bf Y})$ has finite moments of all orders.

\section{Stochastic gradient descent for semilinear problem}\label{sec:SGD}
\subsection{Convergence of stochastic gradient descent}
As mentioned in Section~\ref{subsec:direct-method}, in order to find an approximation to the solution of problem~\eqref{eqn:PDE}, it is sufficient to solve the stochastic optimization problem~\eqref{eqn:min-problem}.
The natural choice for optimizing the function $J(c)$ is the stochastic gradient descent~\cite{robbins1951stochastic}, which is one of the most fundamental ingredients of large-scale machine learning~\cite{bottou2010large, bottou2018optimization}. 
In this section, we discuss an application of SGD to the specific minimization problem~\eqref{eqn:min-problem}.
For convenience, we denote the unbiased estimator of $\nabla J(c)$ by 
\begin{equation}\label{eqn:gradient}
g(c, {\bf Y}) = g^1(c, {\bf Y}) + g^2(c, {\bf Y}),
\end{equation}
where
\begin{equation*}
\begin{split}
g^1(c, {\bf Y}) &= \left(\int_D \kappa(x, {\bf Y}) \nabla_x\Gamma(x, {\bf Y}) \nabla_x\Gamma(x, {\bf Y})^{\T}   \, dx\right) c \in \mathbb{R}^{M(N+1)}\\
g^2(c, {\bf Y}) &= \int_D f(x, c^{\T} \Gamma(x, {\bf Y}), {\bf Y}) \Gamma(x, {\bf Y})\, dx \in \mathbb{R}^{M(N+1)}
\end{split}
\end{equation*}
so that $\mathbb{E} \{g(c, {\bf Y})\} = \nabla J(c)$.
Instead of computing the deterministic gradient $\nabla J(c_n)$ at each iteration, SGD simply requires the 
stochastic gradient $g(c_n, {\bf Y}_n)$ for each iteration:  
\begin{equation}\label{eqn:vanilla-SGD}
c_{n+1} = c_n - \eta_n g(c_n, {\bf Y}_n), \qquad n \geq 1,
\end{equation}
where $\eta_n$ is the learning rate.
In the context of machine learning, each ${\bf Y}_n$ corresponds to a randomly picked example from the dataset.
In the context of this article, we interpret ${\bf Y}_n$ as a realization of
the stochastic germ ${\bf Y}(\omega_n)$.
Note that the iterative sequence of coefficients $\{c_n\}$ is a sequence of random variables since each $c_n$ depends on ${\bf Y}_1, \ldots, {\bf Y}_{n-1}$ for $n \geq 1$.

Intuitively, SGD works because, while each direction $-g(c, {\bf Y})$ may not be one of the descent directions of $c$, it is, however, a descent direction in expectation.
It is clear that SGD is advantageous as it only requires computation of a single realization of the gradient at each iteration.
Yet, it is a fundamental question whether SGD applied to the problem~\eqref{eqn:min-problem}
produces a convergent sequence minimizing the function $J(c)$.
To answer this question, we first present three important lemmas concerning the properties of the function $J(c)$ and the gradient estimator $g(c, {\bf Y})$.
\begin{lemma}\label{lem:Lipschitz-gradient}
Under Assumptions~\ref{assume:kappa}, \ref{assume:f} and~\ref{assume:basis}, 
the function $J(c):\mathbb{R}^{M(N+1)} \to \mathbb{R}$ is continuously differentiable 
and its gradient $\nabla J(c) : \mathbb{R}^{M(N+1)} \to \mathbb{R}^{M(N+1)}$ is Lipschitz continuous 
with Lipschitz constant $L > 0$, i.e., for any $c_1, c_2 \in \mathbb{R}^{M(N+1)}$,
\[
| \nabla J(c_1) - \nabla J(c_2)|  \leq L | c_1 - c_2 |. 
\]
\end{lemma}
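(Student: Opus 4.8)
The plan is to split $J=J^1+J^2$ and bound the two pieces separately, using that a sum of two functions with Lipschitz gradients again has a Lipschitz gradient, with constant the sum of the two.

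For the linear part, I would first note that $J^1(c)=\tfrac12 c^{\T} A c$ is a quadratic form with
\[
A \;=\; \mathbb{E}\left\{\int_D \kappa(x,{\bf Y})\,\nabla_x\Gamma(x,{\bf Y})\,\nabla_x\Gamma(x,{\bf Y})^{\T}\,dx\right\}\in\mathbb{R}^{M(N+1)\times M(N+1)},
\]
which is symmetric and positive semidefinite. The only point to verify is that its entries are finite: using $\nabla_x(\phi_i\Psi_j)=\Psi_j\,\nabla_x\phi_i$, the $((i,j),(k,l))$-entry equals $\mathbb{E}\{\Psi_j\Psi_l\int_D\kappa\,\nabla_x\phi_i\cdot\nabla_x\phi_k\,dx\}$, which by Assumption~\ref{assume:kappa} and two applications of Cauchy--Schwarz (one over $\Omega$, one over $D$) is bounded by $\kappa_{\max}(\mathbb{E}\Psi_j^2)^{1/2}(\mathbb{E}\Psi_l^2)^{1/2}\big(\int_D|\nabla_x\phi_i|^2\,dx\big)^{1/2}\big(\int_D|\nabla_x\phi_k|^2\,dx\big)^{1/2}$, finite by Assumption~\ref{assume:basis}. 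Hence $J^1\in C^\infty$ with $\nabla J^1(c)=Ac$, and $|\nabla J^1(c_1)-\nabla J^1(c_2)|\le\|A\|\,|c_1-c_2|$.

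For the nonlinear part, I would next justify differentiating under both the expectation and the spatial integral. Since only $\partial_u F=f$ is prescribed, normalize $F(x,u,{\bf Y})=\int_0^u f(x,s,{\bf Y})\,ds$, so that $|F(x,u,{\bf Y})|\le f_{\max}|u|$ and $J^2$ is finite for every $c$ (this does not affect $\nabla J$). Writing $\Gamma_k=\phi_{i(k)}\Psi_{j(k)}$ for the $k$-th component of $\Gamma$, the difference quotient of the integrand in direction $e_k$ equals, by the mean value theorem, $f(x,\xi,{\bf Y})\,\Gamma_k$ for some intermediate value $\xi$, and is dominated by the $c$-independent function $f_{\max}|\Gamma_k|$, which is integrable with respect to $dx\otimes d\mathbb{P}$ because $\mathbb{E}\{\int_D|\phi_i\Psi_j|\,dx\}\le|D|^{1/2}\big(\int_D|\phi_i|^2\,dx\big)^{1/2}(\mathbb{E}\Psi_j^2)^{1/2}<\infty$ by Assumption~\ref{assume:basis} and the boundedness of $D$. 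The dominated convergence theorem then yields $\nabla J^2(c)=\mathbb{E}\{\int_D f(x,c^{\T}\Gamma,{\bf Y})\,\Gamma\,dx\}$, and continuity of $\nabla J^2$ follows from continuity of $f$ (or a posteriori from the Lipschitz estimate below).

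Finally, to bound $|\nabla J^2(c_1)-\nabla J^2(c_2)|$, I would move the Euclidean norm inside the expectation and integral and use the uniform Lipschitz continuity of $f$ in $u$ (Assumption~\ref{assume:f}) together with Cauchy--Schwarz:
\[
|\nabla J^2(c_1)-\nabla J^2(c_2)|\;\le\; L_f\,\mathbb{E}\left\{\int_D|(c_1-c_2)^{\T}\Gamma|\,|\Gamma|\,dx\right\}\;\le\; L_f\,\Theta\,|c_1-c_2|,
\]
where $\Theta:=\mathbb{E}\{\int_D|\Gamma(x,{\bf Y})|^2\,dx\}=\sum_{i=1}^M\sum_{j=0}^N\big(\int_D|\phi_i|^2\,dx\big)\,\mathbb{E}\{\Psi_j^2\}$ is finite once more by Assumption~\ref{assume:basis} and boundedness of $D$. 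Taking $L=\|A\|+L_f\Theta$ completes the argument. I expect the only genuinely delicate step to be the simultaneous interchange of $\nabla_c$ with both $\mathbb{E}$ and $\int_D$ in $J^2$; it is handled by the single dominated-convergence argument above, whose required integrability is precisely what the fourth-moment and $L^4$ conditions in Assumption~\ref{assume:basis} are designed to supply.
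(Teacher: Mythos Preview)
Your proof is correct and follows essentially the same decomposition $J=J^1+J^2$ as the paper, with the same mechanism in each piece (finiteness of the matrix $A$ for $J^1$, the uniform Lipschitz bound on $f$ for $J^2$). Two minor differences worth noting: you explicitly justify differentiation under $\mathbb{E}$ and $\int_D$ via dominated convergence (the paper simply writes down $\nabla J$ without comment), and your bound on $\nabla J^2$ via the triangle inequality and Cauchy--Schwarz needs only $\mathbb{E}\{\int_D|\Gamma|^2\,dx\}$, whereas the paper squares first, applies Jensen, and ends up needing $\mathbb{E}\{\int_D|\Gamma|^4\,dx\}$ --- so your route is slightly sharper and does not actually require the full fourth-moment strength of Assumption~\ref{assume:basis}.
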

\begin{proof}
Throughout the proof, $L$ denotes a generic positive constant that may differ by a scaling constant.  
Let $c_1, c_2 \in \mathbb{R}^{M(N+1)}$ be two arbitrary vectors of coefficients for the expansions over $V_M \otimes S_N$.
For the linear part of $J(c)$, by Assumption~\ref{assume:f}, 
\begin{equation*}
\begin{split}
\left|\nabla J^1(c_1) - \nabla J^1(c_2) \right|^2  
&=
\left|\mathbb{E}\left\{ \int_D \kappa(x, {\bf Y}) \nabla_x \Gamma(x, {\bf Y}) \nabla_x \Gamma(x, {\bf Y})^{\T} \, dx \right\}
 (c_1 - c_2) \right|^2\\
 &\leq
 L \left|\mathbb{E}\left\{ \int_D  \nabla_x \Gamma(x, {\bf Y}) \nabla_x \Gamma(x, {\bf Y})^{\T} \, dx \right\}
 (c_1 - c_2) \right|^2\\
 &\leq
 L \left\|\mathbb{E}\left\{ \int_D  \nabla_x \Gamma(x, {\bf Y}) \nabla_x \Gamma(x, {\bf Y})^{\T}  \, dx \right\}
 \right\|^2  |c_1 - c_2|^2,
\end{split}
\end{equation*}
where $\|\cdot\|$ is the matrix $2$-norm. 
To see that the matrix norm $\left\|\mathbb{E}\left\{ \int_D  \nabla_x \Gamma(x, {\bf Y}) \nabla_x \Gamma(x, {\bf Y})^{\T}  \, dx \right\}
 \right\|^2$ is finite, 
it is sufficient to bound terms of the form 
\begin{equation}\label{eqn:linear-Lipschtiz}
 \left(\int_D \partial_{x_{k_1}} \phi_{i_1}(x)  \partial_{x_{k_2}} \phi_{i_2}(x)  \, dx  \right)
\mathbb{E} \left\{  \Psi_{j_1}({\bf Y}) \Psi_{j_2}({\bf Y}) \right\}    
\end{equation}
with $k_1, k_2 = 1, \cdots, d$, $i_1, i_2 = 1, \cdots, M$ and $j_1, j_2 = 0, \cdots, N$. 
By Assumption~\ref{assume:basis}, 
the finiteness of~\eqref{eqn:linear-Lipschtiz} follows immediately.

Now for the nonlinear part $J^2(c)$, 
by Jensen's inequality and Assumption~\ref{assume:f}
\begin{equation}
\begin{split}
\left|\nabla J^2(c_1) - \nabla J^2(c_2) \right|^2  
&=
\left|\mathbb{E}\left\{\int_D (f(x, u_{c_1}, {\bf Y}) - f(x, u_{c_2}, {\bf Y})) \Gamma(x, {\bf Y}) \, dx\right\}\right|^2\\
&\leq
L\mathbb{E} \left\{  \int_D |u_{c_1} - u_{c_2}|^2
|\Gamma(x, {\bf Y})|^2 \, dx    \right\} \\
&\leq 
L \mathbb{E}\left\{ \int_D |\Gamma(x, {\bf Y})|^4   \, dx \right\} | c_1 - c_2 |^2.
\end{split}
\end{equation}
Finally, $\mathbb{E}\left\{ \int_D |\Gamma(x, {\bf Y})|^4   \, dx \right\}$ is finite by Assumption~\ref{assume:basis}.
\end{proof}

\begin{lemma}\label{lem:2nd-moment-bound}
Under Assumptions~\ref{assume:kappa}, \ref{assume:f} and~\ref{assume:basis}, 
there exist constants $M_1 \geq 0$ and $M_2 > 0$ such that
\[
\mathbb{E}\{ | g(c, {\bf Y}) |^2\} \leq M_1 | \nabla J(c)   |^2 + M_2 .
\]
That is, the second moment of the gradient estimator is allowed to grow quadratically
in the mean gradient.
\end{lemma}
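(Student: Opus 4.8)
The plan is to bound $\mathbb{E}\{|g(c,{\bf Y})|^2\}$ by splitting the gradient estimator through $|g(c,{\bf Y})|^2 \le 2|g^1(c,{\bf Y})|^2 + 2|g^2(c,{\bf Y})|^2$ and treating the linear and nonlinear contributions separately. The nonlinear part only feeds the additive constant: because $|f| \le f_{\max}$ almost surely \emph{uniformly in} $u$ (Assumption~\ref{assume:f}), Cauchy--Schwarz in the spatial variable gives, pathwise in ${\bf Y}$,
\[
|g^2(c,{\bf Y})|^2 = \left| \int_D f(x, c^{\T}\Gamma(x,{\bf Y}), {\bf Y})\, \Gamma(x,{\bf Y}) \, dx \right|^2 \le f_{\max}^2\, |D| \int_D |\Gamma(x,{\bf Y})|^2 \, dx ,
\]
and upon taking expectations the moment conditions of Assumption~\ref{assume:basis} (cf.\ the proof of Lemma~\ref{lem:Lipschitz-gradient}) yield $\mathbb{E}\{|g^2(c,{\bf Y})|^2\} \le M_2'$ with $M_2'$ independent of $c$.

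For the linear part, write $A({\bf Y}) = \int_D \kappa(x,{\bf Y})\, \nabla_x\Gamma(x,{\bf Y})\, \nabla_x\Gamma(x,{\bf Y})^{\T}\, dx$, so $g^1(c,{\bf Y}) = A({\bf Y})c$ and $\mathbb{E}\{|g^1(c,{\bf Y})|^2\} \le \mathbb{E}\{\|A({\bf Y})\|^2\}\,|c|^2$ in the matrix $2$-norm. Combining Assumption~\ref{assume:kappa} with Assumption~\ref{assume:basis} — the fourth-moment hypothesis on the $\Psi_j$ being exactly what bounds $\mathbb{E}\{\Psi_{j_1}^2({\bf Y})\Psi_{j_2}^2({\bf Y})\}$ through Cauchy--Schwarz, while the $L^2(D)$ integrability of $\nabla_x\phi_i$ controls the spatial integrals — gives $\mathbb{E}\{\|A({\bf Y})\|^2\} \le C < \infty$, hence $\mathbb{E}\{|g^1(c,{\bf Y})|^2\} \le C\,|c|^2$. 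It therefore remains to absorb the factor $|c|^2$ into $M_1|\nabla J(c)|^2 + M_2$, which reduces to a reverse-type estimate $|c| \le a\,|\nabla J(c)| + b$.

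This last estimate is the heart of the argument. The key fact is that $\nabla J(c) = \bar A c + \nabla J^2(c)$ with $\bar A = \mathbb{E}\{A({\bf Y})\}$ symmetric and \emph{positive definite}: coercivity $\kappa \ge \kappa_{\min} > 0$ gives $\bar A \succeq \kappa_{\min}\, \mathbb{E}\{\int_D \nabla_x\Gamma\, \nabla_x\Gamma^{\T}\, dx\}$, and the Gram-type matrix on the right is positive definite because $\{\phi_i\}$ spans a subspace of $\mathcal{H}_0^1(D)$ on which $v \mapsto \|\nabla_x v\|_{L^2(D)}$ is a norm (Poincar\'e inequality) while $\{\Psi_j\}$ is $L^2_{\mathbb{P}}$-orthogonal. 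On the other hand $\nabla J^2$ is uniformly bounded,
\[
|\nabla J^2(c)| = \left| \mathbb{E}\left\{ \int_D f(x, c^{\T}\Gamma, {\bf Y})\, \Gamma(x,{\bf Y}) \, dx \right\} \right| \le f_{\max}\, \mathbb{E}\left\{ \int_D |\Gamma(x,{\bf Y})| \, dx \right\} =: B ,
\]
by Assumptions~\ref{assume:f} and~\ref{assume:basis}. Hence $|\nabla J(c)| \ge |\bar A c| - |\nabla J^2(c)| \ge \lambda_{\min}(\bar A)\,|c| - B$, i.e.\ $|c| \le \lambda_{\min}(\bar A)^{-1}(|\nabla J(c)| + B)$. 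Substituting into $\mathbb{E}\{|g^1(c,{\bf Y})|^2\} \le C|c|^2$, using $(s+t)^2 \le 2s^2 + 2t^2$, and adding the bound on $\mathbb{E}\{|g^2|^2\}$ yields the claim with $M_1 = 4C\,\lambda_{\min}(\bar A)^{-2}$ and $M_2 = 4C\,\lambda_{\min}(\bar A)^{-2}B^2 + 2M_2'$.

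I expect the main obstacle to be precisely the positive definiteness of $\bar A$ — equivalently the lower bound $|\nabla J(c)| \gtrsim |c| - \mathrm{const}$ — since this is what forces $M_1 > 0$ (the linear part of $g$ genuinely grows with $|c|$) and is where coercivity of $\kappa$, the Poincar\'e inequality on $V_M \subset \mathcal{H}_0^1(D)$ and the uniform bound on $f$ all enter; the remaining steps are routine bookkeeping with the three assumptions. A Poincar\'e-free alternative for the estimate $|c| \le a\,|\nabla J(c)| + b$ is to observe that $J$ is strongly convex: its Hessian equals $\bar A + \mathbb{E}\{\int_D \partial_u f(x, c^{\T}\Gamma, {\bf Y})\, \Gamma\Gamma^{\T}\, dx\} \succeq \delta\, \mathbb{E}\{\int_D \Gamma\Gamma^{\T}\, dx\}$ by $\partial_u f \ge \delta > 0$ (Assumption~\ref{assume:f}), the last Gram matrix being positive definite by linear independence of the basis functions, so that $\langle \nabla J(c),\, c - c^*\rangle \ge \mu\,|c - c^*|^2$ for the unique minimizer $c^*$ and hence $|c| \le |c^*| + \mu^{-1}|\nabla J(c)|$.
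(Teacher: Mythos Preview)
Your proposal is correct and begins exactly as the paper does: split $|g|^2 \le 2|g^1|^2 + 2|g^2|^2$, bound $\mathbb{E}\{|g^2|^2\}$ by a constant via $|f|\le f_{\max}$ and Assumption~\ref{assume:basis}, and bound $\mathbb{E}\{|g^1|^2\}$ by $C|c|^2$ via $\kappa\le\kappa_{\max}$ and Assumption~\ref{assume:basis}. Where you differ is in the final step. The paper, after arriving at an estimate of the form $\mathbb{E}\{|g|^2\}\le C_1|c|^2+C_2$, simply remarks that ``$\|\nabla J(c)\|^2$ is finite as well'' and asserts that appropriate $M_1,M_2$ can be chosen---it does not explain how to convert the $|c|^2$ growth into $|\nabla J(c)|^2$ growth. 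You correctly identify this as the nontrivial step and supply the missing reverse inequality $|c|\le a|\nabla J(c)|+b$, obtaining it either from positive definiteness of $\bar A=\mathbb{E}\{A({\bf Y})\}$ (coercivity of $\kappa$, Poincar\'e on $V_M\subset\mathcal{H}_0^1(D)$, and the uniform bound on $\nabla J^2$) or, alternatively, from strong convexity of $J$ (which is the content of the paper's Lemma~\ref{lem:strongly-convex}). Both routes are valid; your argument is thus the same in spirit as the paper's but is in fact more complete at the point where the paper's proof is elliptical.
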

\begin{proof}
In virtue of Assumption~\ref{assume:kappa} and~\ref{assume:f},
\begin{equation*}
\begin{split}
\mathbb{E}\{ | g(c, {\bf Y})|^2\} 
&\leq 2 \mathbb{E}\{| g^1(c, {\bf Y})|^2\} + 2 \mathbb{E}\{| g^2(c, {\bf Y})|^2\}\\
&\leq 2 \kappa_{\textrm{max}}^2 \mathbb{E} \left\{ \left\|  \int_D \nabla_x \Gamma(x, {\bf Y}) \nabla_x \Gamma(x, {\bf Y})^{\T} \,dx  \right\|^2   \right\} | c |^2\\
&\qquad + 2 f_{\textrm{max}}^2 \mathbb{E} \left\{ \left|\int_D  \Gamma(x, {\bf Y})  \, dx \right|^2  \right\},
\end{split}
\end{equation*}
which is finite by Assumption~\ref{assume:basis}.
Similar to the proof of Lemma~\ref{lem:Lipschitz-gradient}, 
we can readily show that $\| \nabla J(c) \|^2$ is finite as well. 
Therefore, we can choose two appropriate constants $M_1, M_2 > 0$ such that
\[
\mathbb{E}\{ | g(c, {\bf Y})|^2\} \leq M_1 | \nabla J(c) |^2+ M_2.
\]
\end{proof}

\begin{lemma}\label{lem:strongly-convex}
Under Assumptions~\ref{assume:kappa}, \ref{assume:f} and an additional assumption that
for all $x \in D$ and almost surely all $\omega \in \Omega$, $f(x, c^T \Gamma, {\bf Y}(\omega))$, when viewed as a function of $c$, is continuously differentiable with respect to $c$, i.e., for all $x \in D$, 
\[\mathbb{P}\left(\omega \in \Omega : f(x, c^T \Gamma, {\bf Y}(\omega))~\textrm{continuously differentiable w.r.t.}~c\right) = 1.\]
Then, the function $J(c) : \mathbb{R}^{M(N+1)} \to \mathbb{R}$ is strongly convex (in $c$), i.e., there exists a constant $\lambda > 0$, such that for any $c_1, c_2 \in \mathbb{R}^{M(N+1)}$
\[
(\nabla J(c_1) - \nabla J(c_2) )^{\T} (c_1 - c_2) \geq \lambda |c_1 - c_2 |^2
\]
and hence $J(c)$ has a unique minimizer $c^*$.
\end{lemma}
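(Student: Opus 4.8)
The plan is to write $J = J^1 + J^2$ and treat the two summands separately: the quadratic part $J^1$ will supply the strong-convexity constant, while the nonlinear part $J^2$ only needs to be shown convex (monotone gradient), so that the sum inherits strong convexity. Existence and uniqueness of the minimizer then come for free from strong convexity together with the continuity already established in Lemma~\ref{lem:Lipschitz-gradient}.

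First I would observe that $J^1(c) = \tfrac12\, c^{\T} A c$ is a quadratic form with $A = \mathbb{E}\{\int_D \kappa(x,{\bf Y})\,\nabla_x\Gamma(x,{\bf Y})\,\nabla_x\Gamma(x,{\bf Y})^{\T}\,dx\}$, a symmetric matrix which is finite by Assumption~\ref{assume:basis} (this is exactly what was checked in the proof of Lemma~\ref{lem:Lipschitz-gradient}). The key claim is that $A$ is positive definite, in fact $c^{\T} A c \geq \lambda_1 |c|^2$ for some $\lambda_1 > 0$ and all $c$. Indeed $c^{\T} A c = \mathbb{E}\{\int_D \kappa\,|\nabla_x u_c|^2\,dx\} \geq \kappa_{\min}\,\mathbb{E}\{\int_D |\nabla_x u_c|^2\,dx\}$ with $u_c = c^{\T}\Gamma$; since each realization $u_c(\cdot,{\bf Y})$ lies in $V_M \subset \mathcal{H}_0^1(D)$, Poincar\'e's inequality bounds this below by $C_P\,\mathbb{E}\{\|u_c\|_{\mathcal{H}_0^1}^2\}$; and because the tensor-product basis $\{\phi_i\Psi_j\}$ is linearly independent, $c \mapsto u_c$ is a linear isomorphism onto the finite-dimensional space $V_M\otimes S_N$, on which all norms are equivalent, so $\mathbb{E}\{\|u_c\|_{\mathcal{H}_0^1}^2\} \geq c_0 |c|^2$. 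Hence $\lambda_1 := \kappa_{\min} C_P c_0 > 0$ works, and $(\nabla J^1(c_1) - \nabla J^1(c_2))^{\T}(c_1 - c_2) = (c_1 - c_2)^{\T} A (c_1 - c_2) \geq \lambda_1 |c_1 - c_2|^2$.

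Next I would show $J^2$ is convex. From Lemma~\ref{lem:Lipschitz-gradient}, $\nabla J^2(c) = \mathbb{E}\{\int_D f(x, c^{\T}\Gamma, {\bf Y})\,\Gamma\,dx\}$, so with $u_{c_i} = c_i^{\T}\Gamma$, the quantity $(\nabla J^2(c_1) - \nabla J^2(c_2))^{\T}(c_1 - c_2)$ equals $\mathbb{E}\{\int_D (f(x, u_{c_1}, {\bf Y}) - f(x, u_{c_2}, {\bf Y}))(u_{c_1} - u_{c_2})\,dx\}$. Using the additional differentiability hypothesis and the mean value theorem in the $u$ variable, the integrand is $\partial_u f(x, \xi, {\bf Y})(u_{c_1} - u_{c_2})^2$ for some $\xi$ between $u_{c_1}$ and $u_{c_2}$, hence $\geq \delta\,(u_{c_1} - u_{c_2})^2 \geq 0$ by Assumption~\ref{assume:f}; so the inner product is nonnegative. (Equivalently, and this is the route the added hypothesis is designed for, one may differentiate under the expectation/integral --- legitimate since $|\partial_u f| \leq L_f$ and Assumption~\ref{assume:basis} holds --- to get $\nabla^2 J^2(c) = \mathbb{E}\{\int_D \partial_u f(x, c^{\T}\Gamma, {\bf Y})\,\Gamma\Gamma^{\T}\,dx\}$, which is positive semidefinite because $\partial_u f \geq \delta > 0$; in fact it dominates $\delta\,\mathbb{E}\{\int_D \Gamma\Gamma^{\T}dx\}$, itself positive definite by linear independence of $\{\phi_i\Psi_j\}$ in $L^2$.)

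Adding the two estimates yields $(\nabla J(c_1) - \nabla J(c_2))^{\T}(c_1 - c_2) \geq \lambda_1 |c_1 - c_2|^2$, i.e.\ $J$ is strongly convex with $\lambda = \lambda_1$. A minimizer exists because strong convexity forces $J(c) \to \infty$ as $|c| \to \infty$ and $J$ is continuous (Lemma~\ref{lem:Lipschitz-gradient}); it is unique because a strongly convex $C^1$ function has a strictly monotone gradient, so $\nabla J(c) = 0$ has at most one solution. The step I expect to be the main obstacle is the positive-definiteness of $A$ --- turning the energy seminorm $c \mapsto (\mathbb{E}\{\int_D |\nabla_x u_c|^2\})^{1/2}$ into a genuine lower bound $\lambda_1^{1/2}|c|$; this hinges on Poincar\'e's inequality on $\mathcal{H}_0^1(D)$, linear independence of the tensor-product basis, and equivalence of norms on the finite-dimensional subspace, whereas the remaining manipulations are routine consequences of the uniform bounds in Assumptions~\ref{assume:kappa}--\ref{assume:f} and the integrability in Assumption~\ref{assume:basis}.
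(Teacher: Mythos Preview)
Your argument is correct, but the allocation of work between $J^1$ and $J^2$ is different from the paper's. You make $J^1$ carry the entire strong-convexity constant by invoking Poincar\'e's inequality and equivalence of norms on the finite-dimensional subspace, and then you only need $J^2$ to have a monotone gradient. The paper instead keeps the bound
\[
(\nabla J^2(c_1)-\nabla J^2(c_2))^{\T}(c_1-c_2)\ \geq\ \delta\,(c_1-c_2)^{\T}\,\mathbb{E}\!\left\{\int_D \Gamma\Gamma^{\T}\,dx\right\}(c_1-c_2)
\]
(obtained via the mean value theorem in $c$ rather than in $u$) and combines it with the $J^1$ estimate, so that the combined matrix $\kappa_{\min}\mathbb{E}\{\int_D\nabla_x\Gamma\nabla_x\Gamma^{\T}\} + \delta\,\mathbb{E}\{\int_D\Gamma\Gamma^{\T}\}$ furnishes the constant $\lambda$; positive definiteness is immediate from the second term alone, since the $\{\phi_i\Psi_j\}$ are linearly independent in $L^2(D)\otimes L_{\mathbb P}^2(\Omega)$. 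In other words, the paper lets the lower bound $\partial_u f\geq\delta$ in Assumption~\ref{assume:f} do the work and never appeals to Poincar\'e. Your route has the virtue of showing that strong convexity would survive even if one only had $\partial_u f\geq 0$ (the diffusion term alone suffices on $\mathcal H_0^1$), at the cost of the extra Poincar\'e/norm-equivalence step; the paper's route is shorter but leans essentially on $\delta>0$. Your parenthetical remark about $\nabla^2 J^2\succeq \delta\,\mathbb{E}\{\int_D\Gamma\Gamma^{\T}\}$ is precisely the paper's mechanism, so you have in fact identified both proofs.
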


\begin{proof}
Note that by Assumption~\ref{assume:kappa},
\begin{equation}\label{eqn:strong-convex-estimate-1}
\begin{split}
(\nabla J^1(c_1) - \nabla J^1(c_2))^{\T}(c_1 - c_2)
\geq 
\kappa_{\textrm{min}} (c_1 - c_2)^{\T}\mathbb{E}\left\{ \int_D \nabla_x \Gamma \nabla_x \Gamma^{\T} \,dx  \right\} (c_1 - c_2).
\end{split} 
\end{equation} 
In view of the continuously differentiability of $f$ in $c$, there exists $\widetilde{c} = t c_1 + (1-t)c_2$ for some $t \in [0, 1]$ (that may depend on $x$) such that
\begin{equation*}
\begin{split}
\nabla J^2(c_1) - \nabla J^2(c_2)
&=
\mathbb{E}\left\{ \int_D (f(x, c_1^{\T} \Gamma, {\bf Y}) - f(x, c_2^{\T} \Gamma, {\bf Y})) \Gamma \, dx\right\}\\
&= 
\mathbb{E}\left\{ \int_D \nabla_c f(x, \widetilde{c}^{\T} \Gamma, {\bf Y})^{\T} (c_1 - c_2) \Gamma \, dx\right\}\\
&=
\mathbb{E}\left\{ \int_D \partial_u f(x, \widetilde{c}^{\T} \Gamma, {\bf Y}) \Gamma \Gamma^{\T}  (c_1 - c_2)  \, dx    \right\}.
\end{split}
\end{equation*}
Since $\partial_u f$ is uniformly bounded from below by Assumption~\eqref{assume:f}, we have
\begin{equation}\label{eqn:strong-convex-estimate-2}
\begin{split}
(\nabla J^2(c_1) - \nabla J^2(c_2))^{\T}(c_1 - c_2) 
& = (c_1 - c_2)^{\T}\mathbb{E}\left\{ \int_D  \partial_u f(x, \widetilde{c}^{\T} \Gamma, {\bf Y}) \Gamma  \Gamma^{\T} \, dx    \right\}(c_1 - c_2)\\
&\geq \delta (c_1 - c_2)^{\T}\mathbb{E}\left\{ \int_D  \Gamma  \Gamma^{\T} \, dx    \right\}(c_1 - c_2).
\end{split}
\end{equation}
The strong convexity follows immediately by combining~\eqref{eqn:strong-convex-estimate-1} and~\eqref{eqn:strong-convex-estimate-2}.
\end{proof}

Now, we are ready to present the main result concerning the convergence of SGD when applied for solving problem~\eqref{eqn:min-problem}.
Given Lemmas~\ref{lem:Lipschitz-gradient}, \ref{lem:2nd-moment-bound} and~\ref{lem:strongly-convex},
the proof of the following result is simply a straightforward application of Theorem~$4.7$ in~\cite{bottou2018optimization}.
\begin{theorem}\label{thm:main-thm}
Under the same assumptions as in Lemmas~\ref{lem:Lipschitz-gradient}, \ref{lem:2nd-moment-bound} and~\ref{lem:strongly-convex},
suppose that the learning rate $\eta_n$ satisfies
\[
\eta_n = \frac{\beta}{\gamma + n},   \qquad n > 1
\]
for some constants $\beta > \lambda^{-1}$ and $\gamma > 0$ such that $\eta_1 \leq L^{-1}M_1^{-1}$.
Then, the function $J(c)$ decays sublinearly to the minimum in expectation., i.e., 
\[
\mathbb{E}\{J(c_n)\} - J(c^*) \leq \frac{\nu}{\gamma + n},
\]
where $\nu = \dfrac{\beta^2 L M_2}{2(\beta \lambda - 1)}$.
\end{theorem}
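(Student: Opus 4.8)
The three preceding lemmas are exactly the structural hypotheses required by the classical convergence theory for SGD: Lemma~\ref{lem:Lipschitz-gradient} gives the $L$-smoothness of $J$, Lemma~\ref{lem:2nd-moment-bound} gives the growth bound $\mathbb{E}\{|g(c,{\bf Y})|^2\} \leq M_1|\nabla J(c)|^2 + M_2$ on the stochastic gradient, and Lemma~\ref{lem:strongly-convex} gives strong convexity with modulus $\lambda$. The plan is therefore to check that these match the assumptions of Theorem~$4.7$ in~\cite{bottou2018optimization} and to reproduce its short argument in the present notation. First I would fix the probabilistic bookkeeping: writing $\mathcal{F}_n = \sigma({\bf Y}_1, \ldots, {\bf Y}_{n-1})$, each iterate $c_n$ from~\eqref{eqn:vanilla-SGD} is $\mathcal{F}_n$-measurable while ${\bf Y}_n$ is independent of $\mathcal{F}_n$, so unbiasedness of the estimator gives $\mathbb{E}\{g(c_n,{\bf Y}_n)\mid\mathcal{F}_n\} = \nabla J(c_n)$, which is what legitimizes all the conditional expectations below.

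The core is the standard one-step estimate. Applying the descent lemma implied by $L$-smoothness (Lemma~\ref{lem:Lipschitz-gradient}) to the update~\eqref{eqn:vanilla-SGD},
\[
J(c_{n+1}) \leq J(c_n) - \eta_n \nabla J(c_n)^{\T} g(c_n,{\bf Y}_n) + \frac{L\eta_n^2}{2}|g(c_n,{\bf Y}_n)|^2,
\]
taking $\mathbb{E}\{\cdot\mid\mathcal{F}_n\}$ and inserting the bound of Lemma~\ref{lem:2nd-moment-bound} gives
\[
\mathbb{E}\{J(c_{n+1})\mid\mathcal{F}_n\} \leq J(c_n) - \eta_n\!\left(1 - \frac{L M_1 \eta_n}{2}\right)\!|\nabla J(c_n)|^2 + \frac{L M_2 \eta_n^2}{2}.
\]
Since $\eta_n \leq \eta_1 \leq L^{-1}M_1^{-1}$ and $\eta_n$ is nonincreasing, the bracket is at least $\tfrac12$, so the cross term may be bounded above by $-\tfrac{\eta_n}{2}|\nabla J(c_n)|^2$. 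Strong convexity (Lemma~\ref{lem:strongly-convex}) implies the Polyak--{\L}ojasiewicz inequality $|\nabla J(c_n)|^2 \geq 2\lambda(J(c_n) - J(c^*))$; substituting it and taking total expectations yields, with $a_n := \mathbb{E}\{J(c_n)\} - J(c^*)$, the scalar recursion
\[
a_{n+1} \leq (1 - \lambda\eta_n)\, a_n + \frac{L M_2 \eta_n^2}{2}.
\]

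Finally I would close the bound by induction on $n$. With $\eta_n = \beta/(\gamma+n)$, the hypothesis $\beta > \lambda^{-1}$ makes $\beta\lambda - 1 > 0$, so $\nu = \beta^2 L M_2 / (2(\beta\lambda - 1))$ is well defined and positive; assuming $a_n \leq \nu/(\gamma+n)$, a direct computation using $(\gamma+n)^{-1} - (\gamma+n+1)^{-1} \leq (\gamma+n)^{-2}$ shows the recursion propagates the bound to $a_{n+1} \leq \nu/(\gamma+n+1)$, provided the base case $a_1 \leq \nu/(\gamma+1)$ holds, which can be enforced by taking $\nu = \max\{\beta^2 L M_2 / (2(\beta\lambda-1)),\ (\gamma+1)a_1\}$ without changing the stated rate ($a_1$ is finite since $J$ is real-valued and bounded below by $F_{\min}$ plus the nonnegative quadratic term). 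I do not expect a genuine obstacle, as everything has been arranged to fit~\cite{bottou2018optimization}; the only points demanding care are the measurability/independence setup that justifies the conditional expectations, and verifying that the two-sided restrictions on the step size ($\eta_n$ of harmonic-type decay together with $\eta_1 \leq L^{-1}M_1^{-1}$, and $1-\lambda\eta_n \geq 0$) are mutually consistent with $\beta > \lambda^{-1}$ and $\gamma$ chosen large enough, which follows since $\lambda \leq L \leq L M_1$.
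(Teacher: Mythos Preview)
Your proposal is correct and matches the paper's approach exactly: the paper does not give a self-contained proof but simply states that, given Lemmas~\ref{lem:Lipschitz-gradient}--\ref{lem:strongly-convex}, the result is ``a straightforward application of Theorem~4.7 in~\cite{bottou2018optimization}'', and what you have written is precisely that argument spelled out. Your observation about the base case is well taken: the constant $\nu$ as stated in the theorem omits the $\max$ with $(\gamma+1)(J(c_1)-J(c^*))$ that appears in~\cite{bottou2018optimization}, so strictly speaking the displayed bound holds only with $\nu$ replaced by that maximum; this is a minor imprecision in the paper's formulation rather than a flaw in your reasoning.
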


\begin{remark}
Some remarks about the above result are in order. 
\begin{enumerate}
\item Diminishing learning rate has to be used to guarantee the convergence. The initial
learning rate $\eta_1$ cannot be larger than a certain threshold.
\item We comment that the $\mathcal{O}(n^{-1})$ rate is the fastest convergence rate that the stochastic gradient descent can achieve \cite{agarwal2012information}. However, the multiplicative constant can be improved by incorporating the second order information of the function $J(c)$. We will discuss more on this aspect in the next chapter.
\item A similar result can be obtained when the function $J(c)$ is convex but not strongly convex. However, the convergence deteriorates to $\mathcal{O}(n^{-1/2})$ \cite{nemirovski2009robust}. 
\end{enumerate}
\end{remark}

It is well known that the SGD~\eqref{eqn:vanilla-SGD} suffers from the adverse effect of noisy gradient estimation.
On the other hand, there is no particular reason to estimate the gradient only based on one realization of the random variables at each iteration. 
Therefore, it is natural to introduce a mini-batch at each iteration in order to ``stabilize"
the algorithm.
That is, at the $n$-th iteration we average the gradient over a batch of $N_g$ realizations of the random variable ${\bf Y}$ in order to obtain a less noisy gradient estimation
\[
g_{\textrm{mb}}(c_n, {\bf Y}_n^{\textrm{mb}}) = \frac{1}{N_g} \sum_{i=1}^{N_g} g(c_n, {\bf Y}_{n, i}),
\]
where ${\bf Y}_n^{\textrm{mb}} = ({\bf Y}_{n,1}, \ldots, {\bf Y}_{n,{N_g}})$
and each 
${\bf Y}_{n,i}$ is the $i$-th realization in the mini-batch at the $n$-th iteration of SGD. 
With the mini-batch gradient, we have the mini-batch SGD
\begin{equation}\label{eqn:mini-batch-SGD}
c_{n+1} = c_n - \eta_n g_{\textrm{mb}}(c_n, {\bf Y}_n^{\textrm{mb}}).
\end{equation}
Clearly, the mini-batch averaging reduces the variance of gradient estimation by a factor of $1 / N_g$ but is also $N_g$ times more expensive than standard SGD.
More sophisticated mini-batch strategies can be applied to accelerate SGD~\cite{zhao2014accelerating, dekel2012optimal}. 
Our simulation results suggest that mini-batch is crucial for the convergence of SGD in order to take a relatively large learning rate.

\subsection{The second order SGD}
Further improvements to the SGD~\eqref{eqn:vanilla-SGD} may be
achieved by way of incorporating the second order information
pertaining to the function $J(c)$.  Theorem~\ref{thm:main-thm}
elucidates the fact that the constant appearing in the
$\mathcal{O}(n^{-1})$ convergence rate depends on $\nu$, which in turn
depends on the condition number of the Hessian $\nabla^2 J(c)$.  This
is similar to the deterministic optimization where the second order
information is often incorporated to overcome the ill-conditioning of
the optimization problem~\cite{nocedal2006numerical}.  The same
approach can be utilized in the stochastic setting by means of
adaptively rescaling the stochastic gradients based on matrices
capturing local curvature information of the function $J(c)$, so that
the constant is significantly improved as a result.  More precisely,
we consider an iteration scheme
\begin{equation}\label{eqn:mini-batch-second-order-SGD}
c_{n+1} = c_n - \eta_n H_n g_{\textrm{mb}}(c_n, {\bf Y}_n^{\textrm{mb}}), \qquad n \geq 1,
\end{equation}
where $H_n$ is a symmetric positive definite approximation to the inverse of the Hessian $(\nabla^2 J(c_n))^{-1}$.
In fact, it was shown in \cite{bottou2005line} that if $H_n$ is updated dynamically such that $H_n \to (\nabla^2 J(c^*))^{-1}$, then the multiplicative constant appearing in the $\mathcal{O}(n^{-1})$ convergence rate is independent of the condition number of the Hessian.
It is true that approximation of $H_n$ is often based on a small set of samples and hence is very noisy. 
However, it has been long observed that the Hessian matrix need not be as accurate as the gradient in order to yield an effective iteration since
the iteration $\eqref{eqn:mini-batch-second-order-SGD}$ is more tolerant to noise in the Hessian estimate than it is to noise in the gradient estimate.
Therefore, it may be beneficial to incorporate partial Hessian information in the stochastic setting. 
To this end, we denote the unbiased estimator of the Hessian $\nabla^2 J(c)$ as
\begin{equation}\label{eqn:Hessian}
h(c, {\bf Y}) = h^1(c, {\bf Y}) + h^2(c, {\bf Y}),
\end{equation}
where
\begin{equation*}
\begin{split}
h^1(c, {\bf Y}) &= \int_D \kappa(x, {\bf Y}) \nabla_x\Gamma(x, {\bf Y})    \nabla_x\Gamma(x, {\bf Y})^{\T} \, dx  \in \mathbb{R}^{M(N+1)\times M(N+1)},\\
h^2(c,{\bf Y}) &= \int_D  \partial_u f(x, c^{\T} \Gamma(x, {\bf Y}), {\bf Y}) \Gamma(x, {\bf Y}) \Gamma(x, {\bf Y})^{\T} \, dx \in \mathbb{R}^{M(N+1)\times M(N+1)}
\end{split}
\end{equation*}
so that $\mathbb{E}\{h(c, {\bf Y})\} = \nabla^2 J(c)$. 
Note that the estimator $h^1(c, {\bf Y})$ is indeed independent of $c$ whereas $h^2(c, {\bf Y})$ depends on $c$ in a nonlinear way. 
For convenience, we define two $M$ by $M$ matrices $A({\bf Y})$ and $B(c, {\bf Y})$ with
components
\[
A_{i_1 i_2}({\bf Y}) = \int_D \kappa(x, {\bf Y})\left(\partial_{x_1} \phi_{i_1}(x) \partial_{x_1}\phi_{i_2}(x)  
+ \ldots + \partial_{x_d} \phi_{i_1}(x) \partial_{x_d}\phi_{i_2}(x)\right) \, dx
\]
and 
\[
B_{i_1 i_2}(c, {\bf Y}) = \int_D   \partial_u f(x, c^{\T} \Gamma(x, {\bf Y}), {\bf Y}) \phi_{i_1}(x) \phi_{i_2}(x)    \, dx 
\]
for $i_1, i_2 = 1, \ldots, M$.
Then, we can express the Hessian estimator in the following block matrix form,
\begin{equation*}
h^1(c, {\bf Y}) = 
\begin{bmatrix}
A({\bf Y}) \Psi_0({\bf Y}) \Psi_0({\bf Y}) & \ldots & A({\bf Y}) \Psi_0({\bf Y}) \Psi_N({\bf Y})\\
\vdots & \ddots & \vdots  \\
A({\bf Y}) \Psi_N({\bf Y}) \Psi_0({\bf Y}) & \ldots & A({\bf Y}) \Psi_N({\bf Y}) \Psi_N({\bf Y})
\end{bmatrix}
\end{equation*}
\begin{equation*}
h^2(c, {\bf Y}) = 
\begin{bmatrix}
B(c, {\bf Y}) \Psi_0({\bf Y}) \Psi_0({\bf Y}) & \ldots & B(c, {\bf Y}) \Psi_0({\bf Y}) \Psi_N({\bf Y})\\
\vdots & \ddots & \vdots  \\
B(c, {\bf Y}) \Psi_N({\bf Y}) \Psi_0({\bf Y}) & \ldots & B(c, {\bf Y}) \Psi_N({\bf Y}) \Psi_N({\bf Y})
\end{bmatrix}.
\end{equation*}
At each iteration of~\eqref{eqn:mini-batch-second-order-SGD},
we attempt to obtain $H_n$ from limited realizations of the estimator $h(c_n, {\bf Y})$.
It is worthwhile noting that for each realization of ${\bf Y}$, the above two matrices are rank deficient. 
Nonetheless, their expectations with respect to ${\bf Y}$, i.e., $\mathbb{E}\{h^1(c, {\bf Y})\}$
and $\mathbb{E}\{h^1(c, {\bf Y})\}$, 
are of full rank given that the Hessian $\nabla^2 J(c)$ is invertible.
This fact suggests that for a robust second order scheme~\eqref{eqn:mini-batch-second-order-SGD}, we need to sample sufficient realizations of $h(c, \bf Y)$ at each iteration so that the sample average of the Hessian becomes invertible.
Thus, classical second order methods, such as BFGS~\cite{nocedal2006numerical},
have to be used with caution in this case.
On the other hand, employing the full second order information may not be necessary
considering the high computational cost (due to large size of the Hessian matrix) and the noisy nature of Hessian estimation.
Moreover, poor curvature estimation may even have an adverse effect to the convergence of SGD.
Motivated by the fact that diagonal matrices are often utilized as preconditioners to combat the ill conditioning issue in computational linear algebra,
we simply use the block diagonal of $h(c, \bf Y)$ as the scaling matrix, i.e., 
\begin{equation*}
H(c, {\bf Y}) = \textrm{block-diag}(h(c, {\bf Y})^{-1}).
\end{equation*}
It is important to note that the block diagonal matrix $H(c, {\bf Y})$ defined above is always of full rank. 
Multiplication of the gradient by $H$ is equivalent to applying a linear transformation to each segment of the search direction $g(c, {\bf Y})$ separately.
Finally, we are ready to present the SGD-PCE algorithm.

\begin{algorithm}[!ht]
\caption{The SGD-PCE algorithm}\label{alg:SGD-PCE}
\textbf{Input}: The total number of iterations $N_{\textrm{sgd}}$; the mini-batch size $N_g$ for gradient estimation and the mini-batch size $N_h$ for Hessian estimation\\
\textbf{Output}: The solution coefficients $c$
\begin{algorithmic}[1]
\For{$n = 1 : N_{\textrm{sgd}}$}
  \State Generate the random vector ${\bf Y}_n^g = ({\bf Y}_{n, 1}, \ldots, {\bf Y}_{n, N_g})$ for $n$-th mini-batch;
  \For{$i = 1 : N_g$}
    \State Compute the stochastic gradient $g(c_n, {\bf Y}_{n,i})$ as defined in~\eqref{eqn:gradient}
  \EndFor
  \State Compute the mini-batch (averaged) gradient 
  \[\bar{g}(c_n, {\bf Y}_n^g) = \frac{1}{N_g} \sum_{i=1}^{N_g}  g(c_n, {\bf Y}_{n, i})\]
  \State Generate the random vector ${\bf Y}_n^h = ({\bf Y}_{n, 1}^h, \ldots, {\bf Y}_{n, N_h}^h)$ for Hessian estimation
    \For{$j = 1 : N_h$}
    \State Compute the stochastic block diagonal Hessian $h_b(c_n, {\bf Y}_{n,j}^h))$
  \EndFor
  \State Compute the mini-batch (averaged) block diagonal Hessian
  \[\bar{h}_b(c_n, {\bf Y}_n^h) = \frac{1}{N_h}  \sum_{j=1}^{N_h} h_b(c_n, {\bf Y}_{n, j}^{h})\]
  \State Update $c_n \gets c_n - \eta_n \bar{h}_b(c_n, {\bf Y}_n^h)^{-1} \bar{g}(c_n, {\bf Y}_n^g) $
\EndFor
\end{algorithmic}
\end{algorithm}

\begin{remark}
A few remarks regarding the algorithm are in order. 
\begin{enumerate}
\item Since $h_{\textrm{mb}}$ is a block diagonal matrix, we only need to solve $N+1$
linear systems  each of size $M$ by $M$ in order to update $c_n$.
Hence, the the total complexity for this update is of complexity $\mathcal{O}(M^3 (N+1))$
whereas a full use of the second order information requires $\mathcal{O}(M^3(N+1)^3)$ operations.
\item In this algorithm, mini-batch size is fixed and the preconditioner is simply the block diagonal of the Hessian estimator. We believe that more sophisticated ideas can be applied  
to improve the algorithm further.
For instance, second order methods such as the natural gradient estimation~\cite{amari1998natural, amari2007methods}  
and a dynamic size mini-batch algorithm such as the 
dynamic batch size algorithm~\cite{bottou2018optimization} may be combined for more efficient algorithm.

\end{enumerate}
\end{remark}



\subsection{Variance reduction based on control variates}\label{subsec:control-variates}
In this section, we discuss further variance reduction by taking advantage of the special structure of the gradient estimator $g(c, {\bf Y})$~\eqref{eqn:gradient} and the Hessian estimator $h(c, {\bf Y})$~\eqref{eqn:Hessian}.
Specifically, we explore the application of the control variates (CV) technique~\cite{glasserman2013monte} for reducing the variances in gradient and Hessian estimations.
The fundamental idea of control variates can be briefly described as follows. 
Suppose we aim to estimate the mean of a given random variable $X$ and we have another auxiliary random variable $Z$ with known mean $\mathbb{E}Z = \mu$.
Then, we can construct a new random variable 
\[
\widetilde{X} = X + \lambda (Z - \mu), \qquad \lambda \in \mathbb{R}
\] 
such that $\mathbb{E}X = \mathbb{E}\widetilde{X}$.
That is, $\widetilde{X}$ is an unbiased alternative of $X$.
Minimizing the variance of $\widetilde{X}$ with respect to the parameter $\lambda$ gives the optimal value 
\[
\lambda^* = -\frac{\textrm{Cov}(X, Z)}{\textrm{Var}(X)}
\]
and the corresponding variance of $\widetilde{X}$ is
\begin{equation}\label{eqn:control-variates-variance}
\textrm{Var}(\widetilde{X})
= \textrm{Var}(X)\left[1 - \frac{\textrm{Cov}(X, Z)^2}{\textrm{Var}(X)\textrm{Var}(Z)}\right]
=  \textrm{Var}(X)\left(1 - \rho(X, Z)^2 \right),
\end{equation}
where 
\[\rho(X, Z) = \frac{\textrm{Cov}(X, Z)}{\sqrt{\textrm{Var}(X)\textrm{Var}(Z)}}\]
is the correlation between $X$ and $Z$. 
Hence, the variance reduction achieved through replacing $X$ by $\widetilde{X}$ depends on the choice of the auxiliary random variable $Z$. 
Note that $\rho(X, Z) \in [-1, 1]$ and as a consequence variance reduction is guaranteed so long as $X$ and $Z$ are correlated.
The best, but most likely unfeasible, scenario would amount to choosing $Z$ so that $\rho(X, Z) = 1$ and $\widetilde{X}$ becoming a zero variance estimator as a result.
Therefore, the key for designing an efficient CV random variable $\widetilde{X}$ is to select an auxiliary random variable $Z$ tightly coupled with the original random variable $X$.
We comment that $\lambda^*$ is seldom known, however, a coarse estimation of it  
from its empirical average is often sufficient. 

We apply now the CV strategy to estimate the linear part of the gradient, i.e., $g^1(c, {\bf Y})$.
Recall that $g^1(x, {\bf Y})$ involves elementary random terms 
\[
\kappa(x, {\bf Y}) \Psi_{j_1}({\bf Y}) \Psi_{j_2}({\bf Y}), 
\]
for all $j_1, j_2 = 0, \ldots, N$.
Owing to the fact that the moments of $\{\Psi_j(\bf Y)\}$ can often be computed in advance, we
consider the CV alternatives of the above estimators.
First, in the same spirit as the perturbation method~\cite{liu1986random, yamazaki1988neumann}, we approximate the random field $\kappa(x, {\bf Y})$ at $\mathbb{E}\{\bf Y\}$, i.e.,
\begin{equation}\label{eqn:kappa-zeroth-order-approx}
    \kappa(x, {\bf Y}) \approx \widetilde{\kappa}_0(x, {\bf Y}) \triangleq \kappa(x, \mathbb{E}\{{\bf Y}\}).
\end{equation}
Next, we can construct an unbiased alternative for $\kappa(x, {\bf Y}) \Psi_{j_1}({\bf Y}) \Psi_{j_2}({\bf Y})$
by using the auxiliary random variable $Z = \widetilde{\kappa}_0(x, {\bf Y}) \Psi_{j_1}({\bf Y})\Psi_{j_2}({\bf Y})$, i.e., 
\begin{equation*}
\kappa(x, {\bf Y})\Psi_{j_1}({\bf Y})\Psi_{j_2}({\bf Y}) + \lambda_{j_1, j_2}^* 
\left(
\widetilde{\kappa}_0(x, {\bf Y}) \Psi_{j_1}({\bf Y})\Psi_{j_2}({\bf Y}) - \mathbb{E}\{  \widetilde{\kappa}_0(x, {\bf Y}) \Psi_{j_1}({\bf Y})\Psi_{j_2}({\bf Y})  \}
\right),
\end{equation*}
where 
\[
\lambda_{j_1,j_2}^* = -\frac{\textrm{Cov}\left(\kappa(x, {\bf Y})\Psi_{j_1}({\bf Y})\Psi_{j_2}({\bf Y}), \widetilde{\kappa}_0(x, {\bf Y})\Psi_{j_1}({\bf Y})\Psi_{j_2}({\bf Y})    \right)}{\textrm{Var}\left( \kappa(x, {\bf Y})\Psi_{j_1}({\bf Y})\Psi_{j_2}({\bf Y})   \right)}.
\]
Note that $\mathbb{E}\{\widetilde{\kappa}_0(x, {\bf Y})\Psi_{j_1}({\bf Y})\Psi_{j_2}({\bf Y})\}$ is known since it only involves moments of $\{\Psi_j(\bf Y)\}$ which are, as mentioned before, ordinarily pre-computed.
Also note that, when $\kappa(x, {\bf Y})$ is of small uncertainty, $\widetilde{\kappa}_0(x, {\bf Y})\Psi_{j_1}({\bf Y})\Psi_{j_2}({\bf Y})$ is an approximation to 
$\kappa(x, {\bf Y})\Psi_{j_1}({\bf Y})\Psi_{j_2}({\bf Y})$. Hence, we expect that they are highly correlated and 
the variance reduction is significant by the relation~\eqref{eqn:control-variates-variance}.

Further variance reduction can be achieved by higher order approximation to $\kappa$.
For example, we can employ the first order approximation instead of zeroth order approximation,
i.e.,
 \begin{equation}\label{eqn:kappa-first-order-approx}
 \kappa(x, {\bf Y}) \approx \widetilde{\kappa}_1(x, {\bf Y}) \triangleq \kappa(x, \mathbb{E}\{{\bf Y}\}) + \nabla_{{\bf y}}\kappa(x, \mathbb{E}\{{\bf Y}\})^T  {\bf Y},
 \end{equation}
 where 
 $\nabla_{\bf y}\kappa(x, \mathbb{E}\{\bf Y\})$ is the gradient of $\kappa$ with respect to ${\bf y}$ evaluated at $\mathbb{E}\{\bf Y\}$.

\begin{remark}
It is beneficial to applying CV when the variance of $\kappa(x, {\bf Y})$ is small (i.e., small uncertainty) so that $\kappa(x, {\bf Y}) \Psi_{j_1}({\bf Y})\Psi_{j_2}({\bf Y})$ and $\widetilde{\kappa}(x, {\bf Y}) \Psi_{j_1}({\bf Y})\Psi_{j_2}({\bf Y})$ are highly correlated.
However, when $\kappa(x, {\bf Y})$ has large variance, the variance reduction achieved by CV may not be significant enough. In this case, it may not be worthwhile applying the CV considering the extra computational effort for estimating $\lambda_{j_1,j_2}^*$.
\end{remark}

\section{Numerical experiments}\label{sec:example}
In this section, we assess the numerical performance of the SGD-PCE solver through several one dimensional problems in order to demonstrate the efficiency and accuracy of the algorithm. 


\subsection{Model linear problem with non-homogeneous random field}\label{subsec:example-1}
We first apply the SGD-PCE algorithm to a model linear elliptic problem.
In order to make the example analytically tractable, we consider the case when the spatial 
dimension $d = 1$, that is,
\[-(\kappa(x, {\bf Y})u^{\prime}(x, {\bf Y}))^{\prime} + f(x, {\bf Y}) = 0,\qquad x \in D = \left[-l/2, l/2\right]\]
with zero deterministic boundary conditions $u(-l/2, {\bf Y}) = u(l/2, {\bf Y}) = 0$.
The random diffusivity coefficient $\kappa$ is assumed to be a log-normal random field 
\[\kappa(x, {\bf Y}) = e^{\beta V(x, {\bf Y})},\]
where $V$ is a nonlinear function of the
random vector ${\bf Y} = (A_1, \ldots, A_{n_V}, B_1, \ldots, B_{n_V})$, namely,
\[
V(x, {\bf Y}(\omega)) = \frac{1}{\sqrt{n_V}}\sum_{k=1}^{n_V} A_k(\omega) \cos\left(\frac{2\pi k x}{l}\right)
+ B_k(\omega) \sin\left(\frac{2\pi k x}{l}\right),
\]
and $A_k$ and $B_k$ are independent unit normal random variables.
The same example was considered in~\cite{field2015efficacy}. 
We can easily verify that $V(x, {\bf Y})$ is a Gaussian random field with zero mean and covariance kernel 
\[
\textrm{Cov}_V(x_1, x_2) = \frac{1}{n_V}\sum_{k=1}^{n_V} \cos\left(\frac{2\pi k(x_2-x_1)}{l}\right).
\]
The functional associated with this problem is 
\begin{equation}
E(u) = \mathbb{E}\left\{\int_D \frac{1}{2} \kappa(x, {\bf Y}) (u^{\prime}(x, {\bf Y}))^2 + f(x, {\bf Y}) u(x, {\bf Y}) \, dx\right\}
\end{equation}
and we seek for an approximated minimizer
\[u_c(x, {\bf Y}) = \sum_{i=1}^M \sum_{j=0}^N c_{ij} \phi_i(x) \Psi_j({\bf Y})\]
over the space $V_M \otimes S_N$
that solves the following problem
\[
\min_{c \in \mathbb{R}^{M(N+1)}} J(c) = \mathbb{E}
\left\{
\int_D \frac{1}{2} \kappa(x, {\bf Y}) (u_c^{\prime}(x, {\bf Y}))^2 + f(x, {\bf Y}) u_c(x, {\bf Y}) \, dx
\right\}.
\]
Recall the gradient estimator~\eqref{eqn:gradient}, an unbiased estimator of the partial derivative of $J(c)$ with respect to $c_{i,j}$, is
\[
g_{i,j}(c, {\bf Y}) = \int_D \kappa(x, {\bf Y}) u_c^{\prime}(x, {\bf Y}) \phi_i^{\prime}(x) \Psi_j({\bf Y}) + f(x, {\bf Y}) \phi_i(x) \Psi_j({\bf Y})\, dx
\]
for every $i=1,\ldots, M$ and $j = 0,\ldots, N$.
Similarly, the Hessian estimator~\eqref{eqn:Hessian} in the linear case reads
\[
h_{(i_1,j_1), (i_2, j_2)}(c, {\bf Y}) = \int_D \kappa(x, {\bf Y})\phi_{i_1}^{\prime}(x) \Psi_{j_1}({\bf Y}) \phi_{i_2}^{\prime}(x) \Psi_{j_2}({\bf Y})\, dx
\]
for every $i_1, i_2=1,\ldots, M$ and $j_1, j_2 = 0,\ldots, N$.
An important observation is that the Hessian estimator is independent of 
the vector of coefficients $c$ for the linear problem.

Before solving the minimization problem, we demonstrate the variance reduction of the CV strategy proposed in Section~\ref{subsec:control-variates}.
We are interested in comparing the standard deviations of the standard gradient estimator, the zeroth order control variates gradient estimator~\eqref{eqn:kappa-zeroth-order-approx} and the first order control variates estimator~\eqref{eqn:kappa-first-order-approx} by applying them to compute the first component of the gradient $\nabla J(c)$ (i.e., $\partial_{c_{1,0}} J(c)$) for a fixed vector of coefficients $c$. 
Note that smaller $\beta$ involved in the random field $\kappa$ leads to better approximation to the random field by \eqref{eqn:kappa-first-order-approx} since the variance of $\kappa(x, {\bf Y})$ decreases with respect to $\beta$ for all $x \in D$.
The simulation result is shown in Table~\ref{tab:beta-control-variates-gradient}.
We observe significant variance reductions achieved by the CV technique particularly when the variance of the random field $\kappa$ is small (controlled through $\beta$).
We shall see later that this variance reduction in estimating the stochastic gradient at each SGD iteration can stabilize the convergence of SGD.

\begin{table}[H]
\centering
\begin{tabular}{ c c c c c }
\hline\hline
 $\beta$ & 0.05 & 0.1 & 0.2 & 0.4\\ 
 \hline
 Without CV & $5.0161$ & $5.1200$  &  $5.4821$ & $6.9623$\\
 \hline
 $0$-th order CV  & $0.3273$  & $0.6619$  & $1.3816$ & $3.2591$ \\  
 \hline
  $1$-st order CV & $0.0149$  & $0.0849$  & $0.2488$ & $1.1338$ \\  
 \hline\hline
\end{tabular}
\caption{Standard deviations of the gradient estimators with CV, the first order control variates gradient estimator and the standard gradient estimator for estimating $\partial_{c_{1,0}} J(c)$ (where $c$ is fixed).
In this computation, the stochastic germ is ${\bf Y} = (A_1, A_2, B_1, B_2)$ so that $n_V = 2$.
The PC expansion is truncated up to the order $p = 3$ so that the total number of PC basis size is $N = 35$. The FEM space consists of $M = 10$ linear elements. The standard deviations are estimated over $10^5$ Monte Carlo realizations.}
\label{tab:beta-control-variates-gradient}
\end{table}

Now, we aim to solve the minimization problem in the homogeneous case, i.e., $f(x, {\bf Y}) = 0$.
In this case, the solution is trivial with the unique minimizer $c_{ij}^* = 0$ for all
$i=1, \ldots, M$ and $j = 0, \ldots, N$ and the minimum value
$J(c^*) = 0$.
Therefore, this is an ideal benchmark for testing the accuracy of the SGD-PCE algorithm.
The computed minimum values corresponding to various learning rates
are shown in Table~\ref{tab:learning-rate-vs-energy} with fixed $\beta = 0.1$.
We compare the minimum value computed using CV gradient estimation with that computed 
without using CV.
For the same learning rate $\eta_n$, the result computed with CV is always more accurate than that computed without CV. In particular, when the learning rate is fairly large, the result computed with CV is extremely accurate while that without CV does not even converge!
We confirm this observation by plotting the convergence behavior (in log-log scale) of energies $J(c_n)$ in Figure~\ref{fig:iteration-vs-energy}.
Since the gradient estimation with CV is less noisy than that without CV, 
we expect that the trajectory of the former should be less fluctuating than that of the latter when in the stochastic regime of SGD.
The right panel of Figure~\ref{fig:iteration-vs-energy} confirms this expectation. 
Therefore, although the convergence rate remains roughly the same, SGD with CV does stablize the convergence at the initial stage.
We also plot the divergent behavior obtained from the first order SGD (i.e., without Hessian rescaling) in the left panel of Figure~\ref{fig:iteration-vs-energy}, which suggests that incorporating the second order information is necessary.

\begin{table}[H]
\centering
\begin{tabular}{ c c c c c c}
\hline\hline
 Learning rate & $\frac{1}{n+2}$ & $\frac{2}{n+2}$ & $\frac{5}{n+2}$ & $\frac{10}{n+2}$ & $\frac{100}{n+2}$\\ 
 \hline
  $1$-st order CV  & $6.1\times 10^{-3}$  & $3.0\times 10^{-7}$  & $4.8\times 10^{-18}$ & $3.4\times 10^{-26}$ & $5.8\times 10^{-49}$\\  
 \hline
 Without CV & $1.6\times 10^{-2}$ & $1.1\times 10^{-5}$  &  $6.8\times 10^{-11}$ & $2.1\times 10^{-13}$ & NA\\
 \hline\hline
\end{tabular}
\caption{The minimum energies $J(c^*)$ computed by SGD-PCE under different
learning rate $\eta_n$ with $\beta = 0.1$.
Both the results computed with first order control variates and without control variates are shown to illustrate the advantage
of incorporating the control variates gradient estimation.
Results computed with and without control variates gradient estimations 
The total number of PC basis size is $N = 35$ and the total number of FEM basis is $M = 50$. The results are based on $N_{\textrm{sgd}}=500$ SGD iterations with mini-batch size $N_g = 128$ for gradient estimation and mini-batch size $N_h = 64$ for Hessian estimation.}
\label{tab:learning-rate-vs-energy}
\end{table}

\begin{figure}[H]
    \centering
    \includegraphics[width=130mm]{./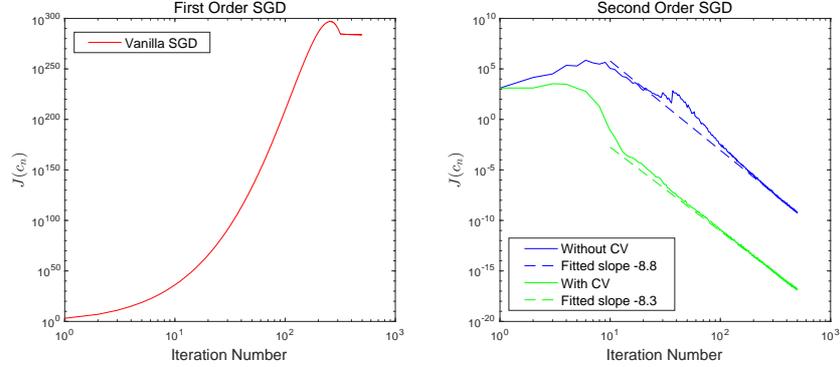}
    \caption{Left: log-log plot of $J(c_n)$ obtained from the first order SGD. Right: log-log plot of $J(c_n)$ obtained from the second order SGD with and without CV. Setup of the simulation is same as that in Table~\ref{tab:learning-rate-vs-energy} with learning rate $\eta_n = 5(n+2)^{-1}$.}
    \label{fig:iteration-vs-energy}
\end{figure}

Finally, we apply the SGD-PCE to the case of non-homogeneous boundary
\begin{equation*}
\begin{split}
&-(\kappa(x, {\bf Y}) u^{\prime}(x, {\bf Y}))^{\prime}  = 0 \qquad x \in D =  [-l/2, l/2]\\
&u(-l/2, {\bf Y}) = 0, u(l/2, {\bf Y}) = 1.
\end{split}
\end{equation*}
We test the accuracy of the approximated solution $u_c$ by examining its joint distribution at $x=-4$ and $x = 2$, i.e., 
\[
F_c(y_1, y_2) = \mathbb{P}\{u_c(-4, {\bf Y}) \leq y_1, u_c(2, {\bf Y}) \leq y_2\}.
\]
In Figure~\ref{fig:CDF-landscape}, we plot of the 
the cumulative distribution function (CDF) of the exact solution $u$ over a range of $y_1$ and $y_2$ (left panel) and the error between the exact CDF and the approximated CDF (right panel).
The right panel shows that the error varies from $-2\%$ to $+7\%$ depending on the location $(y_1, y_2)$ where we evaluate the CDFs.


\begin{figure}[H]
\centering
\subfigure{\label{fig:a}\includegraphics[width=65mm]{./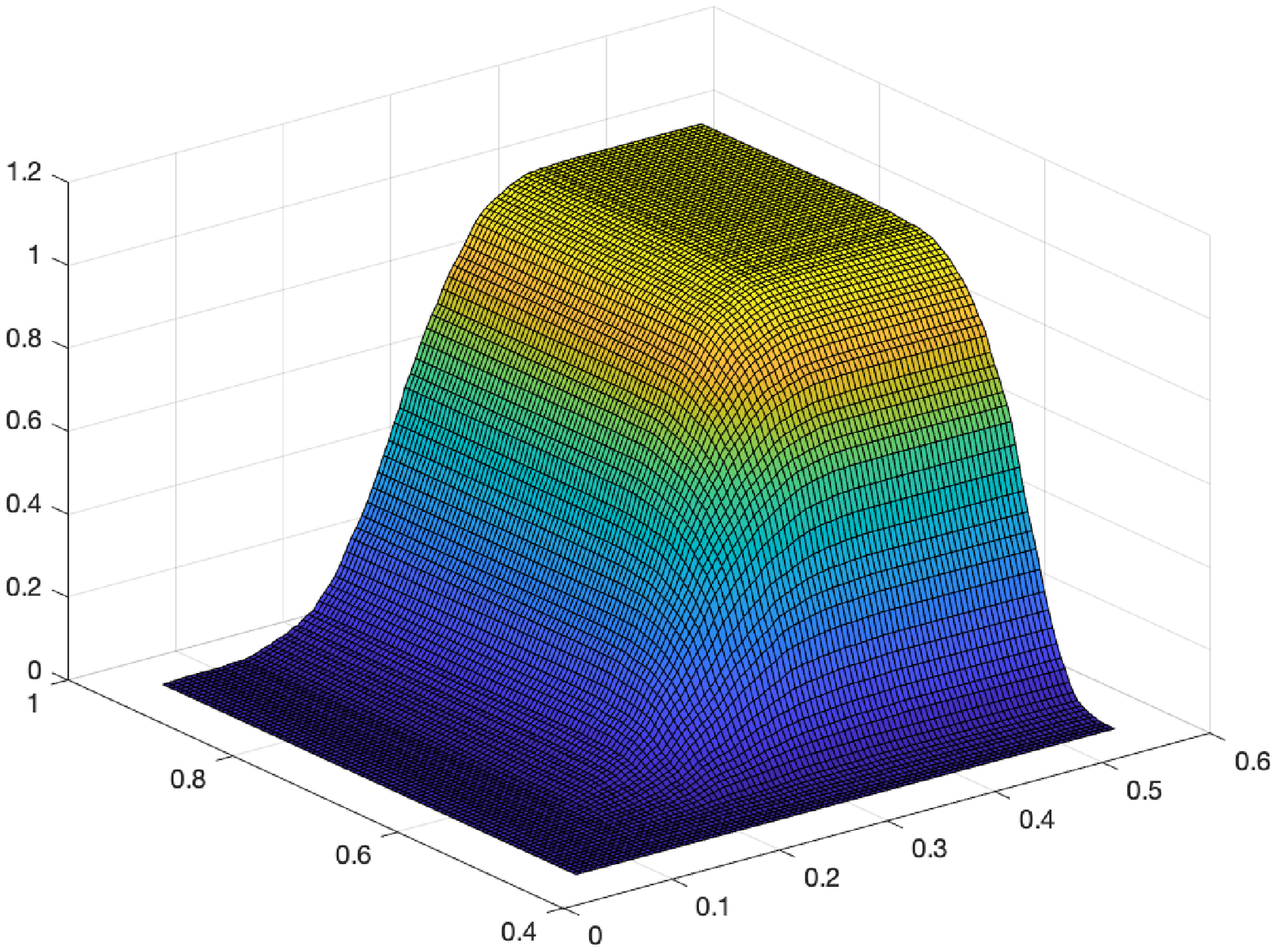}}
\subfigure{\label{fig:b}\includegraphics[width=65mm]{./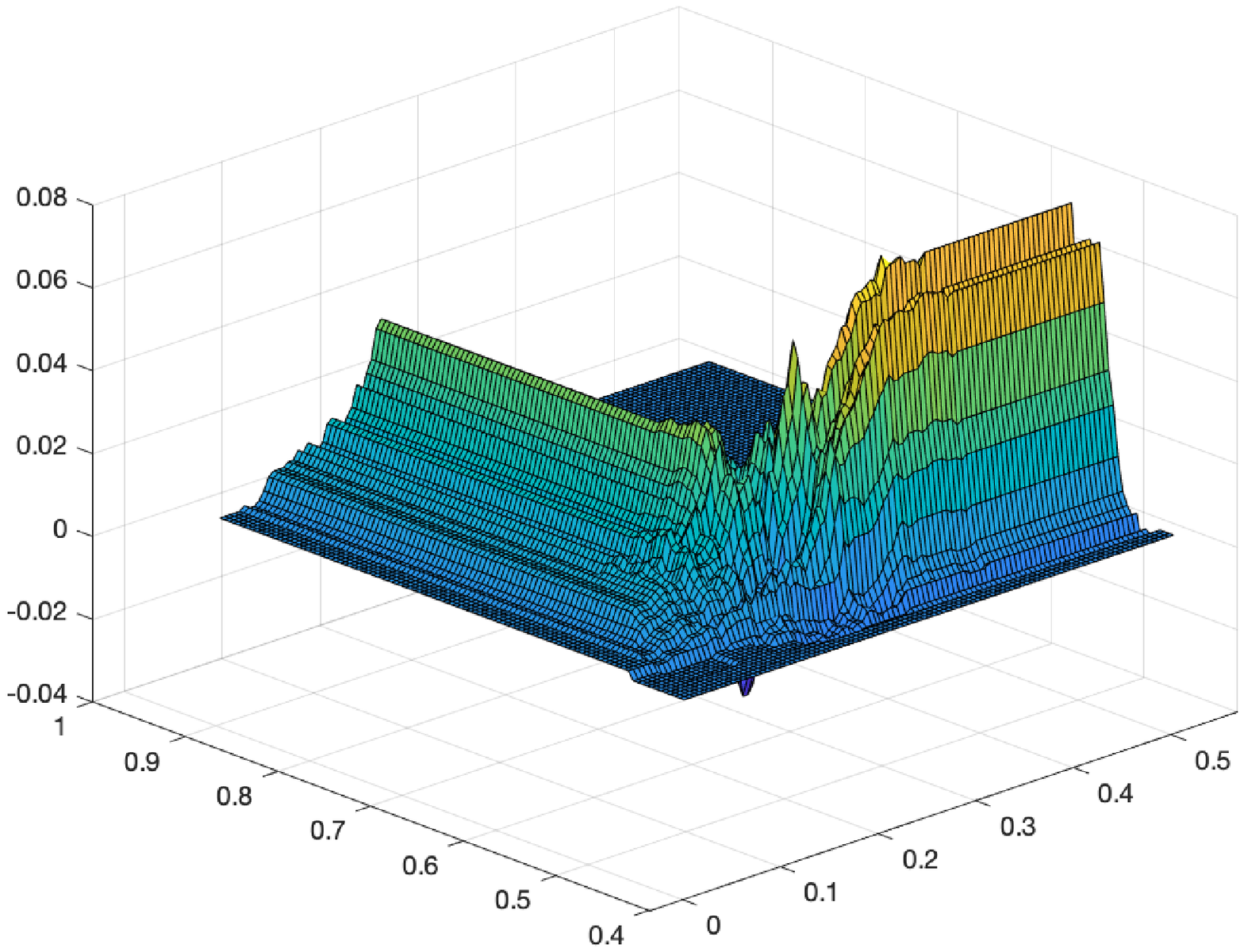}}
\caption{Left: the exact landscape of $F(y_1, y_2) = \mathbb{P}\{u(-4, {\bf Y}) \leq y_1, u(2, {\bf Y}) \leq y_2\}$; Right: the error between the exact CDF and the approximated CDF computed from SGD-PCE, i.e., $F(y_1, y_2) - F_c(y_1, y_2)$.
The learning rate is chosen to be $\eta = 10/n$.}
\label{fig:CDF-landscape}
\end{figure}


\subsection{Model semilinear problem with homogeneous random field}
In this example, we add a nonlinear term to the problem but make the random field $\kappa$ homogeneous (i.e., $\kappa(x, {\bf Y}) = \kappa({\bf Y})$), namely,
\begin{equation}
\begin{split}
-(\kappa({\bf Y}) u^{\prime}(x, {\bf Y}))^{\prime} + f(x, {\bf Y}) + \sin(u(x, {\bf Y})) &= 0 \qquad x \in D = [-{l}/{2}, {l}/{2}],\\
u(-l/2, {\bf Y}) = u(l/2, {\bf Y}) &= 0,
\end{split}
\end{equation}
where the homogeneous random field 
$
\kappa({\bf Y}) = \exp(0.2(Y_1 + Y_2))
$ 
and 
\[
f(x, {\bf Y}) = \pi^2 \sin(\pi x) + \sin\left(\frac{\sin(\pi x)}{\kappa({\bf Y})}\right).
\]
Note that the problem has an exact solution 
\[u(x, {\bf Y}) = \frac{\sin(\pi x)}{\kappa({\bf Y})}\]
which will be used to assess the accuracy of the SGD-PCE solver. 
Given $M$ FEM bases and $N+1$ PC bases, the function $J(c)$ we shall minimize is
\[
J(c) = \mathbb{E}\left\{\int_D \frac{1}{2}\kappa({\bf Y}) (u_c^{\prime}(x, {\bf Y}))^2 + f(x, {\bf Y}) u_c(x, {\bf Y}) - 
\cos(u_c(x, {\bf Y}))  \, dx\right\},
\]
where $u_c(x, {\bf Y})$ is the truncated generalized PC expansion
\[u_c(x, {\bf Y}) = \sum_{i=1}^M \sum_{j=0}^N c_{ij} \phi_i(x)\Psi_j({\bf Y}).\]

\begin{table}[H]
\centering
\begin{tabular}{ c c c c c }
\hline
\hline
 PCE Order & $p=0$ & $p=1$ & $p=2$ & $p=3$\\ 
 \hline
$J(c^*)$ & $-42.7273$  & $-44.9673$  & $-45.0693$ & $-45.1252$ \\  
 \hline 
$L^2$ Error & $9.10\times 10^{-2}$ & $4.30 \times 10^{-3}$ & $3.01 \times 10^{-4}$ & $1.04 \times 10^{-4}$\\
 \hline
 \hline
\end{tabular}
\caption{
The approximated minimum $J(c^*)$ and the $L^2$ error with respect to the order of PC expansion.
The exact minimum value, $E(u^*) = -45.4040$, is computed from Monte Carlo simulation with $10^5$ samples.
The approximate minimum value is computed using SGD-PCE with mini-batch size 
$N_g = N_h = 100$, learning rate $\eta_n = 10/n$ and total number of iterations $N_{\textrm{sgd}}=1000$. 
The physical space $V$ is approximated by $M = 100$ FEM bases. 
The $L^2$ error is evaluated at $x=0.5$.}
\label{tab:PCEorder-vs-energy}
\end{table}

Table~\ref{tab:PCEorder-vs-energy} shows the approximated minimum value $J(c^*)$ and $L^2$ error 
\[\mathbb{E}[(u(x, {\bf Y}) - u_c(x, {\bf Y}))^2]\] 
with respect to the order of PC expansion. The result is consistent with the fact that higher order approximation to the stochastic space leads to smaller error.
Recall that the exact solution $u^*$ is known and hence the exact functional value $E(u^*)$
can be computed as well.
This allows us to plot the convergence of $J(c_n)$ to the true minimum $E(u^*)$.
Figure~\ref{fig:SGD-learning-rate} demonstrates this convergent behavior of SGD with various learning rates $\eta_n$.
An interesting observation is that the difference $J(c_n) - E(u^*)$ dramatically jumps towards zero and then converges slowly to the limit with a steady rate.
However, with slow learning rate, the difference jumps only to a region far away from zero and then enters into the slow convergence phase in a highly noisy manner. In contrast, with faster learning rate, SGD is able to jump quickly to a neighborhood of zero only after a few initial iterations.
This phenomenon suggests that we should always choose a learning rate as fast as possible so long as it does not exceed the theoretical threshold predicted by Theorem~\ref{thm:main-thm}.

\begin{figure}[h]
\centering
\includegraphics[width=130mm]{./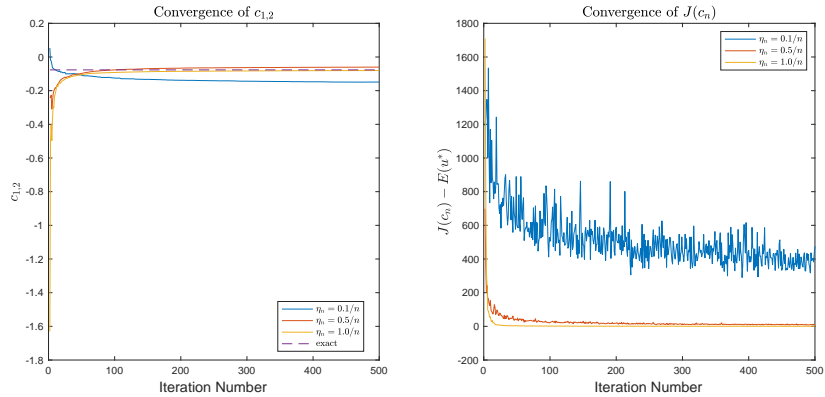}
\caption{
Left: convergence of the coefficient $c_{1,2}$. Right: convergence of $J(c_n)$.
The computation uses SGD-PCE with min-batch size 
$N_g = N_h = 100$. The physical space $V$ is approximated by $M = 100$ FEM basis. 
The stochastic space $S$ is approximated by PC expansion of $2$ random variables up to order $p = 3$ so the total number of PC basis is $N = 10$.}
\label{fig:SGD-learning-rate}
\end{figure}

Finally, we assess the distribution of the approximated solution $u_c(x, {\bf Y})$.
To this end, we compare the CDF of the approximate solution $u_c$ and that of the exact solution $u$ at the point $x=0.5$.
The CDFs obtained by Monte Carlo simulation over $10^5$ samples are plotted in Figure~\ref{fig:CDF}. 

\begin{figure}[h]
\centering
\includegraphics[width=70mm]{./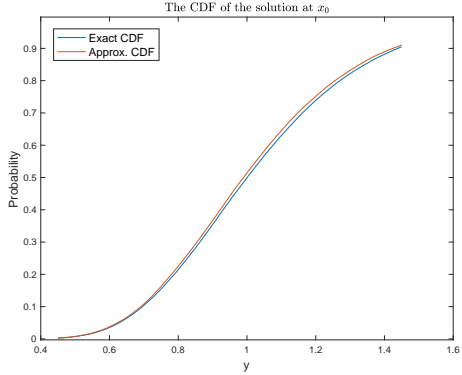}
\caption{The CDF of the solution at $x = 0.5$. The computation uses SGD-PCE with batch size $N_g = N_h = 100$ with a diminishing learning rate $\eta_n = 10/n$. 
The stochastic space $S$ is approximated by PC expansion of $2$ random variables up to 
order $p = 3$.
The physical space $V$ is approximated by $M = 100$ FEM basis.}
\label{fig:CDF}
\end{figure}

\subsection{Model semilinear problem with non-homogeneous random field}
Finally, we study a semi-linear problem with non-homogeneous random field, i.e.,  
\begin{equation*}
\begin{split}
-(\kappa(x, {\bf Y}) u^{\prime}(x, {\bf Y}))^{\prime} + \sin(u(x, {\bf Y})) &= 0, \qquad x \in D  = [-l/2, l/2]\\
u(-l/2, {\bf Y}) = u(l/2, {\bf Y}) &= 0,
\end{split}
\end{equation*}
where $\kappa(x, {\bf Y})$ is the same log-normal random field as in the Section~\ref{subsec:example-1}. 
Note that the exact solution is $u^*(x, {\bf Y}) =0$ and hence the exact minimum value is $E(u^*) = 12$. 
Over the finite dimensional space $V_M \otimes S_N$, 
the functional is
\[
J(c) = \mathbb{E}\left\{\int_D \frac{1}{2}\kappa(x, {\bf Y}) (u_c^{\prime}(x, {\bf Y}))^2 - \cos(u_c(x, {\bf Y}))  \, dx\right\}.
\]
The gradient estimator $g(x, {\bf Y})$ consists of two parts (see~\eqref{eqn:gradient})
\begin{equation*}
\begin{split}
g_{i,j}^1(c, {\bf Y})
&=
\int_D \kappa(x, {\bf Y}) u_c^{\prime}(x, {\bf Y}) \phi_i^{\prime}(x) \Psi_j({\bf Y}) \, dx,\\
g_{i,j}^2(c, {\bf Y})
&=
\int_D \sin(u_c(x, {\bf Y}))\phi_i(x) \Psi_j({\bf Y})\,dx
\end{split}
\end{equation*}
for all $i = 1, \cdots, M$ and $j=0, \cdots, N$.
Similarly, the Hessian estimator also has two parts
\begin{equation*}
    \begin{split}
       h_{(i_1,j_1), (i_2, j_2)}^1(c, {\bf Y}) &= \int_D \kappa(x, {\bf Y})\phi_{i_1}^{\prime}(x) \Psi_{j_1}({\bf Y}) \phi_{i_2}^{\prime}(x) \Psi_{j_2}({\bf Y})\, dx \\
       h_{(i_1,j_1), (i_2, j_2)}^2(c, {\bf Y}) &= \int_D \cos(u_c(x, {\bf Y}))\phi_{i_1}(x) \Psi_{j_1}({\bf Y}) \phi_{i_2}(x) \Psi_{j_2}({\bf Y})\, dx
    \end{split}
\end{equation*}
for all $i_1, i_2 = 1, \cdots, M$ and $j_1, j_2 =0, \cdots, N$.
Observe that the nonlinear part $h^2(c, {\bf Y})$ depends on the coefficient $c$ whereas the linear part $h^1(c, {\bf Y})$ does not.
Hence, $h^2(c, {\bf Y})$ can be extremely noisy at the initial stage when SGD is still in its stochastic regime. 
The noisy estimation of $h^2(c, {\bf Y})$ in turn may have a detrimental rather than beneficial effect to guide the next search direction of SGD.
In contrast, the linear part $h^1(c, {\bf Y})$, although only contains partial second order information, is immune from the noisy updates of $c_n$ at the initial stage of SGD. 
This observation suggests that we can simply utilize the linear part of the Hessian estimator at the initial stage of SGD and incorporate the nonlinear part only after SGD gets stabilized.
In Figure~\ref{fig:full_vs_adaptive_hessian}, we demonstrate the fast convergence behavior of SGD-PCE by incorporating the nonlinear part Hessian $h^2(c, {\bf Y})$ after $100$ iterations. 
In comparison, with the same learning rate, a naive use of the full Hessian information even does not lead to a converged result.

\begin{figure}[h]
\centering
\includegraphics[width=0.5\textwidth]{./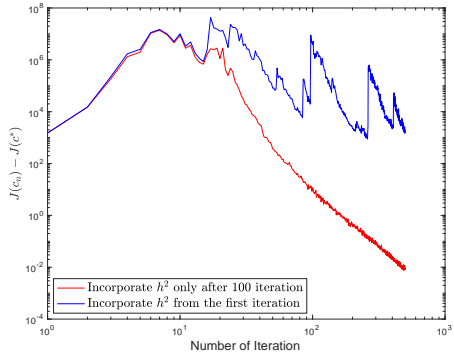}
\caption{Convergence behavior of $J(c_n)$ with the nonlinear part of Hessian incorporated at different stages (log-log scale). The SGD uses a diminishing learning rate $5(n+2)^{-1}$. The stochastic space is approximated by PC expansion of $4$ random variables up to order $p = 3$.
The physical space $V$ is approximated by $M = 50$ FEM basis.}
\label{fig:full_vs_adaptive_hessian}
\end{figure}

Finally, Figure~\ref{fig:grad-hessian} shows the effect of mini-batch sizes $N_g$ and $N_h$ on the convergence of SGD-PCE. The nonlinear part of Hessian is only incorporated after $100$ iterations. Recall that larger $\beta$ corresponds to
larger variance of the random field $\kappa$. 
When $\beta = 0.3$, mini-batch of size $128$ for gradient and $64$ for Hessian are not enough for SGD to converge in $500$ iterations. 
However, an increase of either $N_g$ or $N_h$ helps overcome the ill-conditioning issue of SGD.
When $\beta = 0.4$, $128$ mini-batch samples for gradient estimation are not sufficient even when we increase $N_h$ to $128$. 
However, the algorithm converges after we increase $N_g$ to $256$, which suggests that SGD iteration is more tolerant to noise in the Hessian estimation than it is to the gradient estimation.

\begin{figure}[h]
\centering
\includegraphics[width=0.8\textwidth]{./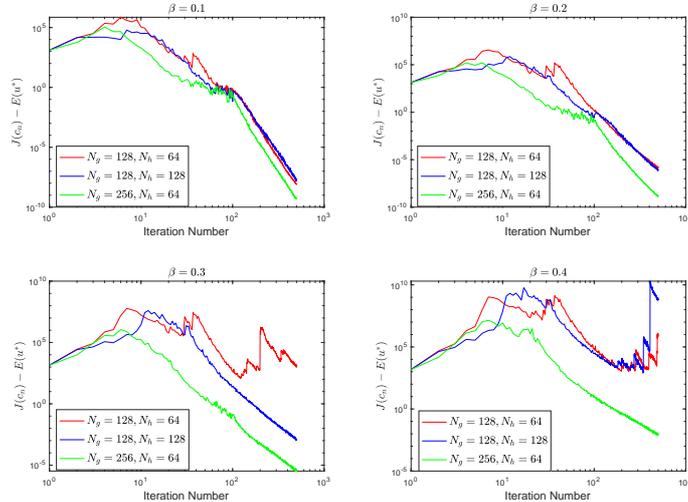}
\caption{Convergence of $J(c_n)$ with various mini-batch sizes for gradient and Hessian (log-log scale). The SGD uses a diminishing learning rate $5(n+2)^{-1}$. The stochastic space $S$ is approximated by PC expansion of $4$ random variables up to order $p = 3$.
The physical space $V$ is approximated by $M = 50$ FEM bases.}
\label{fig:grad-hessian}
\end{figure}

\section*{Summary and Conclusion}
\label{sec:summary}
We have presented a variational framework for solving semilinear PDEs
with random coefficients. The framework relies on the direct methods
of variational calculus to recast the stochastic PDE as a stochastic
minimization problem, which can then be solved by SGD over
finite-dimensional subspaces.  Our variational framework offers key
advantages over traditional approaches based on weak formulations.
First and foremost, from the theoretical standpoint, the direct
methods of variational calculus automatically ensure weak convergence
of the numerical solutions obtained under this framework. Second, our
framework is able to take advantage of the countless SGD algorithms
developed during the rapid ascend of machine learning in the last
decade. Finally, the variational approach is well known to retain
certain structures of the original problem and hence is particularly
advantageous when applied to structure preserving problems.  Based on
this framework, we have proposed an SGD-PCE method utilizing the
general PC expansion to approximate the stochastic space. By taking
advantage of the special structure of the optimization problem derived
from the PC expansion, we are able to design a version of SGD
algorithm that finds the minimizer in an efficient way. We emphasize
that, under our framework, other existing finite dimensional
approximation techniques can be readily utilized in the same spirit.
Despite the various advantages mentioned above, we are also aware of
some weaknesses of our variational framework. For instance, the
SGD-PCE can be computationally very costly, especially when the
dimensionality of the approximation space $V_M\otimes S_N$ is high.
Furthermore, a successful application of SGD in our framework requires
careful tuning of multiple hyper-parameters, e.g. the learning rate,
iteration number and mini-batch size. Often, such tuning is ad-hoc and
hence can be very challenging for complex problems. In order to
overcome these difficulties, it may be beneficial to apply more
advanced variants of SGD such as the adaptive momentum (Adam)~\cite{kingma2014adam}
algorithm capable of adaptively adjusting the learning rate and
efficiently dealing with problems involving large set of
parameters. This will be the focus of our future work.

\section*{Acknowledgments}
\label{sec:acknowledgements}
The authors would like to thank Petr Plech\'a\v{c} and Gideon Simpson for fruitful
discussions.
The research of T.W. was sponsored by the CCDC Army Research Laboratory and was accomplished under Cooperative Agreement Number W911NF-16-2-0190.
The views and conclusions contained in this document are those of the authors and should not be interpreted as representing the official policies, either expressed or implied, of the Army Research Laboratory or the U.S. Government. The U.S. Government is authorized to reproduce and distribute reprints for Government purposes notwithstanding any copyright notation herein.

\bibliography{SGD_PCE}

\begin{thebibliography}{10}
\expandafter\ifx\csname url\endcsname\relax
  \def\url#1{\texttt{#1}}\fi
\expandafter\ifx\csname urlprefix\endcsname\relax\def\urlprefix{URL }\fi
\expandafter\ifx\csname href\endcsname\relax
  \def\href#1#2{#2} \def\path#1{#1}\fi

\bibitem{babuvska2007stochastic}
I.~Babu{\v{s}}ka, F.~Nobile, R.~Tempone, A stochastic collocation method for
  elliptic partial differential equations with random input data, SIAM Journal
  on Numerical Analysis 45~(3) (2007) 1005--1034.

\bibitem{nobile2008sparse}
F.~Nobile, R.~Tempone, C.~G. Webster, A sparse grid stochastic collocation
  method for partial differential equations with random input data, SIAM
  Journal on Numerical Analysis 46~(5) (2008) 2309--2345.

\bibitem{nobile2008anisotropic}
F.~Nobile, R.~Tempone, C.~G. Webster, An anisotropic sparse grid stochastic
  collocation method for partial differential equations with random input data,
  SIAM Journal on Numerical Analysis 46~(5) (2008) 2411--2442.

\bibitem{ghanem2003stochastic}
R.~G. Ghanem, P.~D. Spanos, Stochastic finite elements: a spectral approach,
  Courier Corporation, 2003.

\bibitem{babuska2004galerkin}
I.~Babuska, R.~Tempone, G.~E. Zouraris, Galerkin finite element approximations
  of stochastic elliptic partial differential equations, SIAM Journal on
  Numerical Analysis 42~(2) (2004) 800--825.

\bibitem{gunzburger2014stochastic}
M.~D. Gunzburger, C.~G. Webster, G.~Zhang, Stochastic finite element methods
  for partial differential equations with random input data, Acta Numerica 23
  (2014) 521--650.

\bibitem{xiu2002wiener}
D.~Xiu, G.~E. Karniadakis, The wiener--askey polynomial chaos for stochastic
  differential equations, SIAM journal on scientific computing 24~(2) (2002)
  619--644.

\bibitem{xiu2003modeling}
D.~Xiu, G.~E. Karniadakis, Modeling uncertainty in flow simulations via
  generalized polynomial chaos, Journal of computational physics 187~(1) (2003)
  137--167.

\bibitem{matthies2005galerkin}
H.~G. Matthies, A.~Keese, Galerkin methods for linear and nonlinear elliptic
  stochastic partial differential equations, Computer methods in applied
  mechanics and engineering 194~(12-16) (2005) 1295--1331.

\bibitem{kuo2016application}
F.~Y. Kuo, D.~Nuyens, Application of quasi-monte carlo methods to elliptic pdes
  with random diffusion coefficients: a survey of analysis and implementation,
  Foundations of Computational Mathematics 16~(6) (2016) 1631--1696.

\bibitem{struwe1990variational}
M.~Struwe, Variational methods, Vol. 31999, Springer, 1990.

\bibitem{reddy2017energy}
J.~N. Reddy, Energy principles and variational methods in applied mechanics,
  John Wiley \& Sons, 2017.

\bibitem{ball1989fine}
J.~M. Ball, R.~D. James, Fine phase mixtures as minimizers of energy, in:
  Analysis and Continuum Mechanics, Springer, 1989, pp. 647--686.

\bibitem{muller1999variational}
S.~M{\"u}ller, Variational models for microstructure and phase transitions, in:
  Calculus of variations and geometric evolution problems, Springer, 1999, pp.
  85--210.

\bibitem{simo2006computational}
J.~C. Simo, T.~J. Hughes, Computational inelasticity, Vol.~7, Springer Science
  \& Business Media, 2006.

\bibitem{marsden2001discrete}
J.~E. Marsden, M.~West, Discrete mechanics and variational integrators, Acta
  Numerica 10 (2001) 357--514.

\bibitem{dacorogna2007direct}
B.~Dacorogna, Direct methods in the calculus of variations, Vol.~78, Springer
  Science \& Business Media, 2007.

\bibitem{nemirovski2009robust}
A.~Nemirovski, A.~Juditsky, G.~Lan, A.~Shapiro, Robust stochastic approximation
  approach to stochastic programming, SIAM Journal on optimization 19~(4)
  (2009) 1574--1609.

\bibitem{bottou2010large}
L.~Bottou, Large-scale machine learning with stochastic gradient descent, in:
  Proceedings of COMPSTAT'2010, Springer, 2010, pp. 177--186.

\bibitem{bottou2018optimization}
L.~Bottou, F.~E. Curtis, J.~Nocedal, Optimization methods for large-scale
  machine learning, Siam Review 60~(2) (2018) 223--311.

\bibitem{kingma2014adam}
D.~P. Kingma, J.~Ba, Adam: A method for stochastic optimization, arXiv preprint
  arXiv:1412.6980 (2014).

\bibitem{robbins1951stochastic}
H.~Robbins, S.~Monro, A stochastic approximation method, The annals of
  mathematical statistics (1951) 400--407.

\bibitem{mercer1909xvi}
J.~Mercer, Functions of positive and negative type, and their connection the
  theory of integral equations, Philosophical transactions of the royal society
  of London. Series A, containing papers of a mathematical or physical
  character 209~(441-458) (1909) 415--446.

\bibitem{oksendal2013stochastic}
B.~Oksendal, Stochastic differential equations: an introduction with
  applications, Springer Science \& Business Media, 2013.

\bibitem{le2004a}
O.~Le~Ma{\i}tre, O.~Knio, H.~Najm, R.~Ghanem, Uncertainty propagation using
  wiener--haar expansions, Journal of computational Physics 197~(1) (2004)
  28--57.

\bibitem{le2004b}
O.~Le~Ma{\i}tre, H.~N. Najm, R.~Ghanem, O.~Knio, Multi-resolution analysis of
  wiener-type uncertainty propagation schemes, Journal of Computational Physics
  197~(2) (2004) 502--531.

\bibitem{le2010spectral}
O.~Le~Ma{\^\i}tre, O.~M. Knio, Spectral methods for uncertainty quantification:
  with applications to computational fluid dynamics, Springer Science \&
  Business Media, 2010.

\bibitem{agarwal2012information}
A.~Agarwal, P.~L. Bartlett, P.~Ravikumar, M.~J. Wainwright,
  Information-theoretic lower bounds on the oracle complexity of stochastic
  convex optimization, IEEE Transactions on Information Theory 5~(58) (2012)
  3235--3249.

\bibitem{zhao2014accelerating}
P.~Zhao, T.~Zhang, Accelerating minibatch stochastic gradient descent using
  stratified sampling, arXiv preprint arXiv:1405.3080 (2014).

\bibitem{dekel2012optimal}
O.~Dekel, R.~Gilad-Bachrach, O.~Shamir, L.~Xiao, Optimal distributed online
  prediction using mini-batches, Journal of Machine Learning Research 13~(Jan)
  (2012) 165--202.

\bibitem{nocedal2006numerical}
J.~Nocedal, S.~Wright, Numerical optimization, Springer Science \& Business
  Media, 2006.

\bibitem{bottou2005line}
L.~Bottou, Y.~Le~Cun, On-line learning for very large data sets, Applied
  stochastic models in business and industry 21~(2) (2005) 137--151.

\bibitem{amari1998natural}
S.-I. Amari, Natural gradient works efficiently in learning, Neural computation
  10~(2) (1998) 251--276.

\bibitem{amari2007methods}
S.-I. Amari, H.~Nagaoka, Methods of information geometry, Vol. 191, American
  Mathematical Soc., 2007.

\bibitem{glasserman2013monte}
P.~Glasserman, Monte Carlo methods in financial engineering, Vol.~53, Springer
  Science \& Business Media, 2013.

\bibitem{liu1986random}
W.~K. Liu, T.~Belytschko, A.~Mani, Random field finite elements, International
  journal for numerical methods in engineering 23~(10) (1986) 1831--1845.

\bibitem{yamazaki1988neumann}
F.~Yamazaki, M.~Shinozuka, G.~Dasgupta, Neumann expansion for stochastic finite
  element analysis, Journal of engineering mechanics 114~(8) (1988) 1335--1354.

\bibitem{field2015efficacy}
R.~V. Field~Jr, M.~Grigoriu, J.~Emery, On the efficacy of stochastic
  collocation, stochastic galerkin, and stochastic reduced order models for
  solving stochastic problems, Probabilistic Engineering Mechanics 41 (2015)
  60--72.

\end{thebibliography}

\end{document}